\def\doctype{}
\def\cB{\mathcal{B}}
\def\Z{{\mathbb Z}}
\def\lam{\lambda}
\newcommand{\comment}[1]{}
\numberwithin{equation}{section}
\let\oldsection\section
\newcommand\boldsection[1]{\oldsection{\bf #1}}
\newcommand\starsection[1]{\oldsection*{\bf #1}}
\renewcommand\section{\@ifstar\starsection\boldsection}
\newtheoremstyle{theorem}
  {12pt}		  
  {0pt}  
  {\sl}  
  {\parindent}     
  {\bf}  
  {. }    
  { }    
  {}     
\theoremstyle{theorem}
\newtheorem{thm}{Theorem}[section]  
\newtheorem{lemma}[thm]{Lemma}     
\newtheorem{cons}[thm]{Construction}
\newtheorem{prop}[thm]{Proposition}
\newtheoremstyle{definition}
  {12pt}		  
  {0pt}  
  {}  
  {\parindent}     
  {\bf}  
  {. }    
  { }    
  {}     
\theoremstyle{definition}
\newcommand\rk{{\sc Remark.} }
\renewcommand{\proofname}{Proof}
\renewenvironment{proof}[1][\proofname]{\par
  \pushQED{\qed}%
  \normalfont \partopsep=\z@skip \topsep=\z@skip
  \trivlist
  \item[\hskip\labelsep
        \scshape
    #1\@addpunct{.}]\ignorespaces
}{%
  \popQED\endtrivlist\@endpefalse
}
\renewcommand*\@maketitle{%
  \normalfont\normalsize
  \@adminfootnotes
  \@mkboth{\@nx\shortauthors}{\@nx\shorttitle}%
  \global\topskip42\p@\relax 
  \@settitle
  \ifx\@empty\authors \else {\vskip 1em
\vtop{\centering\shortauthors\@@par}} \fi
  \ifx\@empty\@date \else {\vskip 1em \vtop{\centering\@date\@@par}}\fi 
  \ifx\@empty\@dedicatory
  \else
    \baselineskip18\p@
    \vtop{\centering{\footnotesize\itshape\@dedicatory\@@par}%
      \global\dimen@i\prevdepth}\prevdepth\dimen@i
  \fi
  \@setabstract
  \normalsize
  \if@titlepage
    \newpage
  \else
    \dimen@34\p@ \advance\dimen@-\baselineskip
    \vskip\dimen@\relax
  \fi
} 
\renewcommand*\@adminfootnotes{%
  \let\@makefnmark\relax  \let\@thefnmark\relax
  \ifx\@empty\@subjclass\else \@footnotetext{\@setsubjclass}\fi
  \ifx\@empty\@keywords\else \@footnotetext{\@setkeywords}\fi
  \ifx\@empty\thankses\else \@footnotetext{%
    \def\par{\let\par\@par}\@setthanks}%
  \fi
\thispagestyle{titlepage}
}
\begin{document}

\title[Designs of dimension three]{\large Some new block designs of
dimension three}

\author{Coen del Valle and Peter J.~Dukes}
\address{\rm Department of Mathematics and Statistics,
University of Victoria, Victoria, BC, Canada}
\email{cdelvalle@uvic.ca,dukes@uvic.ca}

\thanks{This research is supported by NSERC grant 312595--2017}

\date{\today}

\begin{abstract}
The dimension of a block design is the maximum
positive integer $d$ such that any $d$ of its points are contained in a
proper subdesign.  Pairwise balanced designs PBD$(v,K)$ have dimension at least two as long as not all points are on the same line.  On the other hand, designs of dimension three appear to be very scarce.  We study designs of dimension three with block sizes in $K=\{3,4\}$ or $\{3,5\}$, obtaining several explicit constructions and one nonexistence result in the latter case.  As applications, we obtain a result on dimension three triple systems having arbitrary index as well as symmetric latin squares which are covered in a similar sense by proper subsquares.
\end{abstract}

\maketitle
\hrule

\section{Introduction}

Let $v$ be a positive integer and $K \subseteq \{2,3,4,\dots\}$.
A \emph{pairwise balanced design} PBD$(v,K)$ is a pair $(X,\cB)$, where $X$ is a
$v$-set of \emph{points} and $\cB$ is a family of {\em blocks} such that
\begin{itemize}
\item
for each $B \in \cB$, we have $B \subseteq X$ with $|B| \in K$; and
\item
any two distinct points in $X$ appear together in exactly one block.
\end{itemize}
These objects are also sometimes known as `linear spaces', where blocks assume the role of lines.

There are arithmetic conditions on $v$ in terms of the set $K$.  If we define 
$\alpha(K):=\gcd\{k-1: k \in K\}$ and $\beta(K):=\gcd\{k(k-1): k \in K\}$, then elementary counting arguments show
\begin{align}
\label{local}
v-1 &\equiv 0 \pmod{\alpha(K)},~\text{and}\\
\label{global}
v(v-1) &\equiv 0 \pmod{\beta(K)}.
\end{align}
Wilson's theory, \cite{RMW2}, establishes that (\ref{local}) and (\ref{global}) are sufficient for large $v$.

In a PBD $(X,\cB)$, a \emph{subdesign} is a pair $(Y,\cB_Y)$, where $Y
\subseteq X$ and $\cB_Y:=\{B \in \cB: B \subseteq Y\}$, which is itself a pairwise balanced design.  Subdesigns are also
called \emph{flats}, especially in the context of linear spaces, and this is the term we mainly use in what follows.

The set of flats in $(X,\cB)$ form a lattice under intersection.  So any set
of points $S \subseteq X$ generates a flat, which we denote by $\langle S \rangle$, equal to the intersection of all flats
containing $S$. We sometimes abuse notation and put a list of elements inside angle brackets.  For $x \in X$, we have $\langle x \rangle = \{x\}$ and for $x, y \in X$ with $x \neq y$, we have that $\langle x,y \rangle$ is the unique block containing $x$ and $y$.  Since our sets here are finite, one can think of $\langle S \rangle$ as the limit of the chain $S=S_0 \subseteq S_1 \subseteq S_2 \subseteq \cdots$, where $S_{i+1} = \cup_{x,y \in S_i} \langle x,y \rangle$ for $i \ge 0$.

The {\em dimension} of a PBD is the maximum integer $d$ such that any set of $d$ points generates a
proper flat.  Any PBD  has dimension at least two, provided that not all points are on the same block.
See \cite{D} and \cite[Chapter 7]{BB} for a discussion of dimension in the context of linear spaces.

The points and lines of either the affine space AG$_d(q)$ or projective space PG$_d(q)$ afford
designs with dimension $d$.  The parameters are special: AG$_d(q)$ leads to a PBD$(q^d,\{q\})$
and PG$_d(q)$ leads to a PBD$(\frac{q^{d+1}-1}{q-1},\{q+1\})$.  More generally, the second author and A.C.H.~Ling showed in 
\cite{DL2} that, given $K$ and $d$, the arithmetic conditions (\ref{local}) and (\ref{global}) are sufficient for existence of a PBD$(v,K)$ of dimension at least $d$ for $v \ge v_0(K,d)$.

A {\em Steiner triple system} is a PBD$(v,\{3\})$.  It is well
known that a Steiner triple system on $v$ points exists if and only if $v
\equiv 1$ or $3 \pmod{6}$.  A {\em Steiner space} is defined to be a Steiner
triple system of dimension at least 3.
Teirlinck in \cite{T} studied the existence of Steiner spaces, finding that,
for $v \not\in \{51,67,69,145\}$, they exist if and only if $v=15,27,31,39$,
or $v \ge 45$.  The four undecided cases are still open, to the best of our
knowledge.

Dukes and Niezen \cite{DN} obtained a nearly complete 
existence theory for the case $K=\{3,4,5\}$ and dimension $3$.
Note that $\alpha(K)=1$ and $\beta(K)=2$ in this case, so (\ref{local}) and
(\ref{global}) disappear and all positive integers are admissible.

\begin{thm}[\cite{DN}]
\label{pbd345a}
There exists a PBD$(v,\{3,4,5\})$ of dimension three if and only if $v=15$
or $v \geq 27$ except for $v=32$ and possibly for $v \in
\{33,34,35,38,41,42,43,47\}$.
\end{thm}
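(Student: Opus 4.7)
The plan is two-pronged: ruling out the claimed non-existence values by direct analysis, and establishing existence via a combination of explicit base constructions with a recursive Wilson-style framework. The strategy mirrors the classical approach for PBD existence results but is complicated considerably by the need to preserve the dimension three condition throughout the recursion.

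First, I would dispose of $v \in \{16,\dots,26\}$ and $v=32$ by structural and exhaustive arguments. In a PBD$(v,\{3,4,5\})$ of dimension three, every three noncollinear points must generate a proper flat, forcing a rich lattice of proper subdesigns. For small $v$ the replication number and the possible sizes of proper flats are severely restricted, and a finite case analysis (assisted by computer search where needed) eliminates these values. For $v=32$ in particular, one expects a Fisher-type inequality on the number of flats of each admissible cardinality to deliver a contradiction without exhaustive enumeration.

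Second, I would exhibit explicit dimension-three designs at a set of base values seeding the recursion: the projective Steiner space PG$_3(2)$ supplies $v=15$, the affine Steiner space AG$_3(3)$ supplies $v=27$, and further small admissible cases at $v=28,\ldots,31,36,37,\ldots$ are built by hand, covering the residue classes required by the recursion's stride. Natural candidates come from modifications of affine and projective geometries, which are flat-rich by design, and from resolvable group-divisible designs whose groups themselves carry dimension-three structure.

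Finally, a recursive construction extends these base values to all large $v$. Beginning with a transversal design TD$(k,n)$ or a suitable GDD with group sizes drawn from the base set, one fills each group with a dimension-three PBD$(g,\{3,4,5\})$ to obtain a PBD on the union. The crux of the argument, and its main obstacle, is ensuring that a triple of points drawn from different groups still generates a proper subconfiguration; this property is by no means automatic in Wilson-type compositions and tends to fail precisely when the master GDD has no internal flats able to capture such a triple. I would address this by choosing the master design to possess its own rich stock of subdesigns, for instance truncated affine planes whose parallel classes supply flats containing any prescribed triple, or inflations of smaller dimension-three PBDs. The remaining small values $v \in \{33,34,35,38,41,42,43,47\}$, which resist both direct construction and the recursion above, account for the \emph{possibly} exceptions in the statement.
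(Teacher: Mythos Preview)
This theorem is quoted from \cite{DN} and is not proved in the present paper, so there is no in-paper proof to compare against. However, Section~2 exposes the machinery on which the cited argument rests, and your outline can be measured against that.

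Your high-level plan is sound in shape: nonexistence for small $v$ by structural analysis of the flat lattice (the paper's Lemma~\ref{subsystem-bound} is the workhorse here, not a Fisher-type count), a stock of small explicit examples built from $\mathrm{PG}_3(q)$ and $\mathrm{AG}_3(q)$ and their truncations, and a Wilson-type recursion for the rest. The gap is exactly where you flag the crux. You propose to guarantee that triples from different groups generate proper flats by choosing a master ``with its own rich stock of subdesigns, for instance truncated affine planes whose parallel classes supply flats containing any prescribed triple, or inflations of smaller dimension-three PBDs.'' The first suggestion fails: truncated affine \emph{planes} and generic transversal designs $\mathrm{TD}(k,n)$ are two-dimensional objects, and weighting a two-dimensional master cannot manufacture a three-dimensional output. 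The second suggestion is the correct one, but you list it as one option among several rather than the essential mechanism. The key lemma, recorded here as Proposition~\ref{wfc-dim}, is that Wilson's fundamental construction applied to a dimension-$d$ master PBD with a nondegenerate weighting yields a GDD of dimension at least $d$ in which any $d$ points lie in a proper sub-GDD saturating each group it meets; filling groups via Construction~\ref{fill-gps} then preserves the dimension. So the recursion is not ``$\mathrm{TD}$ plus dimension-three fillings'' but ``dimension-three master, weighted, then filled''; the masters are themselves the small dimension-three designs already in hand. Until you commit to that lemma and abandon the two-dimensional masters, your recursion does not close.
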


Here, we consider the cases $K=\{3,4\}$ and $K=\{3,5\}$, obtaining two results of a similar style.

\begin{thm}
\label{main}
(a) For $v \equiv 0,1 \pmod{3}$, there exists a PBD$(v,\{3,4\})$ of dimension three if and only if $v=15$
or $v \geq 27$, except possibly for $v \in \{33,34,42,43,54,69,70,72,78\}$.\\
(b) For odd integers $v$, there exists a PBD$(v,\{3,5\})$ of dimension three if and only if $v=15$
or $v \geq 27$, except for $v=33$ and possibly for $v \in \{35,37,41,43,47,51\}$.
\end{thm}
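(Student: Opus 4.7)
The plan is the two-pronged strategy typical of such existence theorems: explicit constructions for each admissible $v$, combined with a dedicated nonexistence argument at $v=33$ for part (b). Necessity of the arithmetic constraints on $v$ is immediate from (\ref{local}) and (\ref{global}), since $\alpha(\{3,4\})=1,\,\beta(\{3,4\})=6$ and $\alpha(\{3,5\})=2,\,\beta(\{3,5\})=2$. The smallest admissible value $v=15$ is realized by PG$_3(2)$, a $\{3\}$-design of dimension three; smaller admissible $v$ can be ruled out by Fisher-type bounds on the sizes of proper flats.

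I would first assemble a library of base dimension-three PBDs on small point sets, one for each admissible residue class. Teirlinck's dimension-three Steiner triple systems supply base cases at $v\in\{15,27,31,39,45,49,\ldots\}$ for free, since $\{3\}\subset\{3,4\}$ and $\{3\}\subset\{3,5\}$; in particular AG$_3(3)$ handles $v=27$. For the remaining residues --- $v\equiv 0,4\pmod 6$ in part (a) and $v\equiv 5\pmod 6$ in part (b) --- base examples must be constructed by hand. Typical ingredients include truncations of affine geometries, augmentations of smaller dimension-three designs by a point-block of size $4$ or $5$, and group-divisible designs whose groups are filled with dimension-three flats. Each candidate is verified by inspecting its flat lattice to confirm that every triple of points generates a proper flat.

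The recursive step is a Wilson-type fundamental construction. Starting from a master dimension-three PBD or GDD whose block sizes are compatible with $K$, I inflate by a factor $n$ via transversal designs of block size in $K$ and fill the resulting point-classes with base dimension-three PBDs. Dimension three is preserved: for any three points of the inflated design, project them to the master; if the three projections coincide, the triple lies in the filled fibre (a proper flat), and if they span a master-flat $F$, the triple lies in the inflation of $F$, whose blocks are closed in the inflated structure and so constitute a proper flat. Together with PBD-closure, iterating this construction from the base library gives every sufficiently large admissible $v$.

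The main obstacle is the nonexistence of a PBD$(33,\{3,5\})$ of dimension three. For a putative such design, $3b_3+10b_5=\binom{33}{2}=528$, and each point lies in $t$ triples and $f$ 5-blocks with $t+2f=16$, so the replication satisfies $r=t+f=16-f\le 16$. By the classical inequality $|F|\le r_x$ for any proper flat $F$ and any $x\notin F$, proper nontrivial flats have odd size at most $15$, and are heavily restricted in structure: flats of size $5$ are single $5$-blocks, flats of size $7$ are Fano planes, and so on. Dimension three forces every one of the $\binom{33}{3}=5456$ point-triples to lie in such a flat, and I would combine this covering condition with the block-counting and replication identities --- case-splitting on the multiset of flat sizes through a chosen point --- to extract a divisibility or parity contradiction. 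The remaining undetermined values in both parts of the theorem lie below the threshold at which the recursive engine activates and are not ruled out by this nonexistence analysis.
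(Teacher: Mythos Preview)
Your overall plan matches the paper's: assemble explicit dimension-three designs via Wilson's fundamental construction applied to known dimension-three PBDs (Steiner spaces, truncated projective and affine spaces over $\F_3$ and $\F_4$), filling groups afterward, and then prove nonexistence at $v=33$ for $K=\{3,5\}$ separately. Your dimension-preservation argument for the inflation step is essentially the content of Proposition~\ref{wfc-dim}. The constructive half is vague but not wrong; the paper simply carries it out block-size by block-size with specific weightings and ingredient GDDs from Lemma~\ref{ingredients}.

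The genuine gap is in your nonexistence argument for $v=33$. Your replication and flat-size bounds are correct, and a block-count modulo $3$ does dispose of one configuration (the paper's $6^28^2$ case). But the remaining configuration --- where a block $B$ of size $5$ lies in one flat of size $15$ and three flats of size $11$ --- does \emph{not} fall to any global divisibility or parity obstruction. The paper's Proposition~\ref{nonex-pbd33} requires a detailed geometric analysis: one shows that the three size-$5$ blocks inside the size-$15$ flat $Y$ must share a common point $\infty$, because the size-$11$ flats they generate partition $X\setminus Y$ into a $3\times 3$ grid of pairs with no room for a fourth transversal direction; then one analyzes how the remaining triple $B_3$ through $\infty$ can extend to flats covering the ``back-diagonal'' pairs, reducing to a single case (three flats of size $9$); and finally one exhibits a specific triple $\{x,y,z\}$ whose generated flat is forced to contain $\infty$ and hence all of $X$. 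None of this is visible from the covering and replication identities alone, so ``case-splitting on flat sizes to extract a divisibility or parity contradiction'' would stall at exactly this point.
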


Note that as a result of \cite[Theorem 7.1]{DN}, it is enough to obtain constructions for dimension at least $3$, since that result facilitates the reduction of dimension.

The outline of the paper is as follows.  Section 2 reviews some background useful for our constructions.  Section 3 gives constructions of various designs for Theorem~\ref{main} not already covered by earlier work.  In Section 4, we provide a nonexistence result for a dimension three PBD$(33,\{3,5\})$.  As an application of Theorem~\ref{main}, we obtain a nearly complete existence theory for dimension three triple systems of general index $\lam$, i.e. 3-uniform set systems in which any two distinct points are together in exactly $\lam$ blocks.  Another application is the construction of symmetric latin squares which are covered by proper subsquares.  These are given in Section 5.

\section{Background}

We begin by stating the basic existence result for PBDs having block sizes in 
$K=\{3,4,5\},\{3,4\}$, or $\{3,5\}$. Proofs and more information can be found in \cite{handbook}.

\begin{thm}
\label{pbd345}
(a) There exists a PBD$(v,\{3,4,5\})$ if and only if $v \neq 2,6,8$.\\
(b) For $v\equiv 0,1\pmod{3}$ there exists a PBD$(v,\{3,4\})$ if and only if $v\ne 6$.\\
(c) For all odd $v$ there exists a PBD$(v,\{3,5\})$.
\end{thm}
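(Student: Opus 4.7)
The plan is to treat necessity and sufficiency separately. Necessity combines the arithmetic conditions \eqref{local} and \eqref{global} with a finite amount of counting; sufficiency is a classical result that can be recovered from Wilson's PBD-closure theory \cite{RMW2} applied to a list of small base designs.

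For necessity in part (a), $K=\{3,4,5\}$ gives $\alpha(K)=1$ and $\beta(K)=2$, so the arithmetic conditions impose no restriction on $v$ and the only candidates for nonexistence are $v=2,6,8$. The case $v=2$ is immediate. For $v=6$, the replication count at a point $p$ gives $\sum_{B\ni p}(|B|-1)=5$, whose only partition into parts from $\{2,3,4\}$ is $2+3$; hence each point lies on exactly one block of size $3$ and one of size $4$, so $3b_3+4b_4=12$ and $3b_3+6b_4=15$ jointly force $2b_4=3$, a contradiction. The same replication trick restricted to $\{3,4\}$ rules out $v=6$ in part (b). For $v=8$ in (a), the equation $\sum_{B\ni p}(|B|-1)=7$ admits more than one admissible profile, and one must argue directly: if a block $B$ of size $5$ is present, the three remaining points must be pairwise joined to $B$ and to each other by blocks meeting $B$ in at most one point, and a short case check rules this out; the no-$5$-block case is dispatched similarly. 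For parts (b) and (c), the arithmetic conditions yield $v\equiv 0,1\pmod 3$ and $v$ odd respectively, and these suffice.

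For sufficiency, the starting point is the existence of Steiner triple systems for $v\equiv 1,3\pmod 6$, which already covers half of the admissible residues in each of (a), (b), and (c). The remaining residues I would handle by producing a finite list of explicit small constructions -- e.g., Kirkman triple systems with a point adjoined, truncated affine planes over $\F_3$ or $\F_4$, and group divisible designs of type $5^u$ or $4^u$ -- and then invoking Wilson's PBD-closure theorem to propagate existence to all sufficiently large $v$ in the relevant residue class. A finite verification at the low end of each residue class clears any residual gaps.

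The main obstacle is the case analysis for $v=8$ in part (a): multiple local profiles are consistent with the global counting equations, and a geometric argument about how blocks of sizes $3$, $4$, and $5$ can mutually intersect is required to eliminate each. The sufficiency half is conceptually clean but involves assembling and checking a short list of explicit ingredient designs. Because every step of this argument is recorded in the Handbook of Combinatorial Designs \cite{handbook}, I would defer to that reference for the full bookkeeping rather than reproduce it here.
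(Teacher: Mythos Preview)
The paper does not actually prove this theorem: it states the result and immediately refers the reader to the Handbook \cite{handbook} with the sentence ``Proofs and more information can be found in \cite{handbook}.'' Your proposal ends in the same place, deferring the full bookkeeping to that reference, so in that sense the two are aligned.

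Where you go further than the paper is in supplying a sketch of the necessity arguments and an outline of the sufficiency strategy. That sketch is broadly correct: your computation of $\alpha(K)$ and $\beta(K)$ in each case is right, the replication argument ruling out $v=6$ is clean (your two equations $3b_3+4b_4=12$ and $3b_3+6b_4=15$ are the incidence and pair counts, and they do force $2b_4=3$), and you correctly flag $v=8$ as the case requiring an ad hoc intersection analysis rather than pure counting. The sufficiency outline via Steiner triple systems for half the residues plus Wilson's PBD-closure for the rest is the standard route. So your proposal is a legitimate expansion of what the paper leaves implicit, and there is no substantive disagreement in approach---only a difference in how much detail is written out before pointing to \cite{handbook}.
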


For what follows, we recall the structure of PBD$(v,K)$ with $K \subseteq \{3,4,5\}$ and small $v$.
The unique such PBDs for $v=7$ and $9$ are the `Fano plane' PG$_2(2)$ and the
affine plane AG$_2(3)$, respectively; in these cases all blocks have size three.  
The unique PBD with $v=10$ arises from the extension of one
parallel class in AG$_2(3)$.  The extended blocks have size four.
The only PBD, up to isomorphism, for $v=11$ has one block of size five and all other blocks of
size three. 

A \emph{group divisible design} (GDD) is a triple $(X,\Pi,\cB)$, where $X$ is a
set of \emph{points}, $\Pi$ is a partition of $X$ into \emph{groups}, and
$\cB$ is a set of \emph{blocks} such that
\vspace{-11pt}
\begin{itemize}
\item
a group and a block intersect in at most one point; and
\item
any two points from distinct groups appear together in exactly one block.
\end{itemize}
\vspace{-11pt}
To specify a set $K$ of allowed block sizes, we use the notation $K$-GDD.
The \emph{type} of a GDD is the list of its group sizes.  When this list
contains, say, $u$ copies of the integer $g$, this is abbreviated with
`exponential notation' as $g^u$.  

We remark that a $K$-GDD of type $1^v$ is just a PBD$(v,K)$.
More generally, if a PBD$(v,K)$ has a partition into subdesigns, which may be singletons or blocks, 
then they can be removed and turned into groups to produce a $K$-GDD on $v$ points.
Or, a GDD can be constructed from a PBD by deleting a point $x$ and all its incident
blocks.




We now review some standard design-theoretic constructions.
First, we can `fill' the groups of a GDD with PBDs.
\begin{cons}[Filling groups]
\label{fill-gps}
Suppose there exists a $K$-GDD on $v$ points with group sizes in $G$.  If,
for each $g \in G$:
\vspace{-11pt}
\begin{itemize}
\item
there exists a PBD$(g,K)$, then there exists a PBD$(v,K)$;
\item
there exists a PBD$(g+1,K)$, then there exists a PBD$(v+1,K)$; and
\item
there exists a PBD$(g+h,K)$ containing a flat of order $h$, then there
exists a PBD$(v+h,K)$.
\end{itemize}
\end{cons}

From a PBD or GDD, one may truncate a subset $A \subseteq
X$, replacing blocks $B \in \cB$ by new blocks $B \setminus A$ (and ignoring blocks of size 0 or 1.)  
For our applications of truncation, we want to avoid blocks of size two.

The next construction builds larger GDDs from smaller ones.

\begin{cons}[Wilson's fundamental construction]
\label{wfc}
Suppose there exists a `master' GDD $(X,\Pi,\cB)$, where
$\Pi=\{X_1,\dots,X_u\}$.  Let $\omega:X \rightarrow \{0,1,2,\dots\}$,
assigning nonnegative weights to each point in such a way that for every $B
\in \cB$ there exists an `ingredient' $K$-GDD of type $\omega(B):=[\omega(x)
\mid x \in B]$.  Then there exists a $K$-GDD of type $$\left[\sum_{x \in
X_1} \omega(x),\dots,\sum_{x \in X_u} \omega(x)\right].$$
\end{cons}

\rk
Truncation can be viewed as a special case of Construction~\ref{wfc}, where
weights in $\{0,1\}$ are used and the ingredients are simply blocks or shortened blocks.

Later, we use various small GDDs as ingredients in the above construction.  The following can be easily found by truncating points from known designs and/or turning disjoint blocks into groups.

\begin{lemma}[see also \cite{Niezen,T}]
\label{ingredients}
The following group divisible designs exist:\\
(a) 3-GDDs of types $2^3,3^3,3^5,4^3,2^4,4^4,2^14^3,2^34^1$;\\
(b) $\{3,4\}$-GDDs of types $ 3^4,2^3 3^1, 2^1 3^3,3^3 4^1,3^1 4^3, 2^4 3^1$;\\
(c) $\{3,5\}$-GDDs of types $2^5$ and $1^i 3^{5-i}$ for each $i \in \{0,1,\dots,5\}$.
\end{lemma}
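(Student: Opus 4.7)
Each listed GDD is small (at most $16$ points), and the paper itself signals the methodology in the paragraph immediately preceding the lemma: truncate points from a larger known design, or turn a suitable collection of disjoint blocks of a small PBD into groups. The plan is to go through each family in turn, exhibit a construction from a well-chosen small design, and verify that every cross-pair appears in exactly one block.

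For part (a), the homogeneous types $2^3$, $3^3$, and $4^3$ are exactly the transversal designs TD$(3,2)$, TD$(3,3)$, TD$(3,4)$, which exist from Latin squares of the corresponding orders. The type $3^5$ arises from a Kirkman triple system KTS$(15)$ by promoting one of its seven parallel classes to serve as the groups; the remaining six parallel classes supply the $30$ triples. The type $4^4$ is the analogous $3$-GDD on $16$ points obtained from a resolvable $\{3\}$-GDD structure. The type $2^4$ is the standard triangle decomposition of $K_{2,2,2,2}$. For the mixed types $2^1 4^3$ and $2^3 4^1$ I would apply Wilson's fundamental construction (Construction~\ref{wfc}) to a $3$-GDD of type $2^4$ or $4^3$, assigning weight $2$ or $4$ to selected points so that the resulting type matches, with the ingredient GDDs for each block already in hand from earlier in the list.

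For part (b), the type $3^4$ is TD$(4,3)$ viewed as a $\{3,4\}$-GDD. The type $3^1 4^3$ is a light truncation of TD$(4,4)$: keep three of the four points in one group, so that the four blocks through the removed point become triples and the remaining blocks stay $4$-blocks. The types $2^3 3^1$, $2^1 3^3$, $3^3 4^1$, and $2^4 3^1$ arise similarly by truncating one or two points from TD$(4,3)$ or TD$(4,4)$, or by taking a small PBD with block sizes in $\{3,4\}$ (available from Theorem~\ref{pbd345}(b)) and designating a suitable disjoint collection of blocks as groups.

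For part (c), the $\{3,5\}$-GDDs use small linear spaces: type $1^5$ is the single $5$-block; $1^4 3^1$ comes from the Fano plane PG$_2(2)$ with one line designated as the size-$3$ group, since every other line meets that line in exactly one point, supplying six triples spanning the four singleton groups; $1^3 3^2$ comes from AG$_2(3)$ with two lines of one parallel class as the groups and the third line of that class as a block covering the three singletons, together with the nine lines transverse to this parallel class; $3^5$ is again from KTS$(15)$. The types $1^1 3^4$ and $1^2 3^3$ arise from PBD$(13,\{3,5\})$ and PBD$(11,\{3,5\})$ (available from Theorem~\ref{pbd345}(c)) by locating disjoint triples to serve as the groups. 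The type $2^5$ is the classical $\{3,5\}$-GDD on $10$ points, built by combining a transversal $5$-block with an appropriate triangle decomposition of the residual graph. The main obstacle is the case analysis itself: every operation is elementary, but one must verify in each case that each cross-pair is in exactly one block. The subtlest steps are the mixed types in part (a) and the types $1^1 3^4$, $1^2 3^3$ in part (c), where one needs to identify compatible disjoint triples inside a chosen small PBD; as the citations \cite{Niezen} and \cite{T} suggest, all of these ingredients are standard in the design-theory literature.
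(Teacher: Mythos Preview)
Your overall approach---truncate small designs or promote disjoint blocks to groups---matches exactly what the paper does; indeed the paper's ``proof'' is nothing more than that one-sentence remark plus the citations, so you are already giving more detail than the authors do.

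There is, however, a concrete gap in your route to the mixed types $2^1 4^3$ and $2^3 4^1$ in part~(a). Applying Construction~\ref{wfc} to a $3$-GDD of type $2^4$ with weights in $\{1,2\}$ forces ingredient $3$-GDDs of type $1^2 2^1$ or $1^1 2^2$ on the blocks meeting the reweighted group, and neither of these exists (they would have $5$ and $8$ cross-pairs respectively, not divisible by $3$); starting from type $4^3$ cannot help either, since WFC preserves the number of groups. A clean replacement for $2^3 4^1$: delete a point on the $5$-block of the unique PBD$(11,\{3,5\})$; the shortened $5$-block becomes the group of size $4$, the three triples through the deleted point become the three groups of size $2$, and the remaining twelve triples are the blocks. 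The type $2^1 4^3$ is less immediate from a single deletion and is one of the cases genuinely deferred to \cite{Niezen,T}; your hedge that it is ``standard in the design-theory literature'' is accurate but is doing real work here. Your description of $4^4$ (``analogous \dots\ resolvable $\{3\}$-GDD structure'') is also too vague to stand on its own; a one-line fix is to give every point of your $3$-GDD of type $2^4$ weight $2$ with ingredient $2^3$. With these repairs, the rest of your case analysis is sound and in the same spirit as the paper.
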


Let us call a weighting of points $\omega: X
\rightarrow \Z_{\ge 0}$ \emph{nondegenerate} if the points of nonzero weight are contained in no proper flat.

\begin{prop}[\cite{DL2,DN}]
\label{wfc-dim}
Suppose a nondegenerate weighting is applied to a PBD $(X,\cB)$ of dimension
$d$.  The result of Wilson's fundamental construction is a GDD of dimension
at least $d$.  Moreover, any set of $d$ points are contained in a proper
sub-GDD that intersects each group in zero or all points.
\end{prop}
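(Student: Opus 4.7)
The plan is to prove this by lifting a proper flat in the PBD up to a sub-GDD in the constructed GDD, using nondegeneracy to guarantee properness.

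Set up notation: the GDD produced by Construction~\ref{wfc} has point set $\tilde X = \{(x,i): x \in X, 1 \le i \le \omega(x)\}$, with groups $\tilde X_x = \{(x,i): 1 \le i \le \omega(x)\}$ indexed by $x \in X$, and blocks inherited, for each $B \in \cB$, from a chosen ingredient $K$-GDD of type $\omega(B)$ placed on $\bigcup_{x \in B} \tilde X_x$. Given any $d$ points $p_1,\ldots,p_d \in \tilde X$, I would first project them down to their underlying PBD-points. Let $S = \{x \in X : (x,i) = p_j \text{ for some } i,j\}$, so $|S| \le d$. If $|S| < d$, I augment $S$ (by adding any $d - |S|$ further points of $X$) to a $d$-element set $S'$; by hypothesis on dimension, $F := \langle S' \rangle$ is a proper flat of $(X,\cB)$ containing $S$.

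Next I would lift: let $\tilde F = \bigcup_{x \in F} \tilde X_x$, and equip it with the groups $\{\tilde X_x : x \in F\}$ and with all GDD-blocks coming from ingredients placed on blocks $B \in \cB$ with $B \subseteq F$. I claim this is a sub-GDD of $\tilde X$ that contains $p_1,\ldots,p_d$ and intersects each group of $\tilde X$ in either $0$ or all its points, both immediate from the definition. To verify the GDD axioms, note that any two points $(x,i),(y,j) \in \tilde F$ from distinct groups satisfy $x,y \in F$, so the unique PBD-block $\langle x,y\rangle$ lies in $F$ (because $F$ is a flat) and its ingredient GDD, already present in the construction, is entirely contained in $\tilde F$; this places $(x,i)$ and $(y,j)$ together in exactly one block of $\tilde F$.

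Finally I would argue $\tilde F \subsetneq \tilde X$, which is the one step where nondegeneracy is essential. Since $F$ is a proper flat of $(X,\cB)$, the nondegeneracy assumption forces the existence of some $y \in X \setminus F$ with $\omega(y) > 0$; otherwise every point of positive weight would lie in $F$, contradicting that no proper flat contains all such points. Then $(y,1) \in \tilde X \setminus \tilde F$, so $\tilde F$ is a proper sub-GDD containing the $d$ chosen points, and the ``moreover'' clause holds by construction. The only subtle point is the use of nondegeneracy for properness, and that is what I would flag as the key step; the rest is bookkeeping from Wilson's construction.
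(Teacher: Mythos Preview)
Your argument is correct and is essentially the standard lifting argument one expects here: project the $d$ points to the base PBD, enclose them in a proper flat $F$ using dimension $d$, and pull back to the union of the corresponding groups, with nondegeneracy guaranteeing that some positively weighted point lies outside $F$ so the lifted sub-GDD is proper. The paper does not supply its own proof of this proposition but merely cites it from \cite{DL2,DN}; your write-up matches the intended reasoning behind those references.

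One minor remark: rather than augmenting $S$ to a set $S'$ of size exactly $d$, you could simply observe that dimension at least $d$ already forces any set of at most $d$ points to lie in a proper flat (since a flat containing $S'$ certainly contains $S$). This avoids the (harmless, but unstated) assumption that $|X|>d$, and is how the argument is usually phrased.
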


Observe that if we have a GDD satisfying the conclusion of Proposition~\ref{wfc-dim}, then filling groups as in
Construction~\ref{fill-gps} results in a PBD of dimension $d$.  A typical construction sequence begins with a PBD, applies Construction~\ref{wfc} with some weights to produce a GDD, and finishes with Construction~\ref{fill-gps} to produce a new PBD.  If the input PBD has dimension three and the weighting is nondegenerate, then the resultant PBD has dimension three.

\section{New constructions}

Since Steiner spaces qualify for both of the sets of block sizes we are considering, we need not construct designs with $v\equiv 1,3 \pmod{6}$, where $v=15,27,31,39$, or $v \ge 45$, unless $v$ is in the set of possible exceptions $\{51,67,69,145\}$. In this section we consider most of the remaining values of $v$, relying on truncations and weightings of the affine and projective spaces, appealing to Construction~\ref{wfc}.

\subsection{Block sizes in $K=\{3,4\}$}

We divide the unsettled values for Theorem~\ref{main}(a) into several propositions, organized by construction method.  Recall that $v
\equiv 0,1\pmod{3}$ in this case.

First, we obtain all but finitely many values of $v$ directly from the case $K=\{3,4,5\}$.

\begin{prop}[see also \cite{DN}]
\label{finite-34pbd}
There exists a PBD$(v, \{3,4\})$ of dimension three for $v\in \{45,46,81,$
$82,84,85,87,88,90,91,93,94,108,109,111,112,117,118,120,121,132,133,135,136,138,139\}$ and all $v \ge 144$.
\end{prop}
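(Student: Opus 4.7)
The plan is to apply Wilson's fundamental construction (Construction~\ref{wfc}) with uniform weight~$3$ to a master PBD$(w,\{3,4,5\})$ of dimension three, and then fill the resulting groups via Construction~\ref{fill-gps}.

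By Theorem~\ref{pbd345a}, such a master is available for $w=15$ and for all $w \ge 27$ outside the possible exception set $E := \{32,33,34,35,38,41,42,43,47\}$. Weighting every point by~$3$ is nondegenerate, since every point has positive weight and the full point set lies in no proper flat. For a block of size $k \in \{3,4,5\}$ the required ingredient is a $\{3,4\}$-GDD of type $3^k$; these are supplied by Lemma~\ref{ingredients}, namely the $3$-GDDs of types $3^3$ and $3^5$ from part~(a) and the $\{3,4\}$-GDD of type $3^4$ from part~(b). Proposition~\ref{wfc-dim} then delivers a $\{3,4\}$-GDD of type $3^w$ on $3w$ points of dimension at least three.

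Since PBD$(3,\{3,4\})$ and PBD$(4,\{3,4\})$ exist trivially (each is a single block), filling groups directly produces a PBD$(3w,\{3,4\})$ of dimension three, while adjoining one extra point and filling yields a PBD$(3w+1,\{3,4\})$ of dimension three. A short enumeration closes the argument: $w=15$ yields $v \in \{45,46\}$; the masters $w \in \{27,28,29,30,31,36,37,39,40,44,45,46\}$ produce all the listed pairs $\{3w,3w+1\}$ below $144$; and every $w \ge 48$ lies outside $E$, so the values $v = 3w$ and $v = 3w+1$ together exhaust all $v \ge 144$ with $v \equiv 0,1 \pmod 3$.

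No step poses a serious obstacle; the construction boils down to a single application of Wilson's fundamental construction at a uniform weight, and the main task is the routine verification that the exceptional masters in $E$ contribute nothing to the claimed list and that the remaining admissible $w$ cover it exactly.
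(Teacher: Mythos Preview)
Your proof is correct and follows essentially the same approach as the paper: give uniform weight~$3$ to a dimension-three PBD$(w,\{3,4,5\})$ from Theorem~\ref{pbd345a}, use the ingredient $\{3,4\}$-GDDs of types $3^3,3^4,3^5$, and fill groups (with or without an extra point) to obtain $v=3w$ or $3w+1$. Your write-up is in fact more explicit than the paper's, carrying out the enumeration of admissible $w$ in detail.
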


\begin{proof}
If we give weight 3 to a PBD$(u,\{3,4,5\})$ of dimension three, add zero or one point, and replace blocks with $\{3,4\}$-GDDs of type $3^3$, $3^4$, $3^5$, the result is, after filling groups, a PBD$(v,\{3,4\})$ with $v=3u$ or $3u+1$.  The result then follows from Theorem~\ref{pbd345a}.
\end{proof}

To realize the largest remaining values, we weight truncations of the projective space PG$_3(4)$.

\begin{prop}
There exists a PBD$(v, \{3,4\})$ of dimension three for all $v\in \{114,126,130,142\}$.
\end{prop}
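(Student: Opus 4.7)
The plan is to realize each target value by applying Wilson's fundamental construction (Construction~\ref{wfc}) to a nondegenerate weighting of a truncation of $\mathrm{PG}_3(4)$, the Steiner $\mathrm{PBD}(85,\{5\})$ of dimension three. By Proposition~\ref{wfc-dim}, a nondegenerate weighting yields a GDD of dimension at least three, and filling groups (Construction~\ref{fill-gps}) preserves this. For each $v$, I would choose weights in $\{0,2,3\}$ on $\mathrm{PG}_3(4)$ so that every line receives an ingredient $\{3,4\}$-GDD whose type appears in Lemma~\ref{ingredients}, and then adjoin one extra point during group-filling to accommodate size-$2$ groups (since $\mathrm{PBD}(2,\{3,4\})$ does not exist).

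For $v=130$, truncate one plane $H$ of $\mathrm{PG}_3(4)$, so the $64$ surviving points form $\mathrm{AG}_3(4)$; weight a single chosen point of $\mathrm{AG}_3(4)$ by $3$ and the remaining $63$ points by $2$, and adjoin one point during the fill. Every $4$-line of $\mathrm{AG}_3(4)$ has an ingredient of type $2^4$ or $2^3 3^1$, both in Lemma~\ref{ingredients}, and the GDD has total weight $129$, giving $v=130$ after the fill. For $v=142$, additionally truncate a point $p$ of a chosen plane $Q$ of $\mathrm{AG}_3(4)$: weight the $15$ points of $Q\setminus\{p\}$ by $3$, weight the $48$ points of $\mathrm{AG}_3(4)\setminus Q$ by $2$, and adjoin one point. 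Lines through $p$ in $Q$ become $3^3$, lines through $p$ outside $Q$ become $2^3$, lines in $Q$ missing $p$ are $3^4$, lines meeting $Q$ in a single point other than $p$ are $2^3 3^1$, and lines parallel to $Q$ in $\mathrm{AG}_3(4)$ are $2^4$; all these are in Lemma~\ref{ingredients}, and total weight $141$ yields $v=142$.

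For $v=114$ and $v=126$, the required total weight is substantially lower, so I would employ a more aggressive truncation: take $T$ to contain $H$ together with additional substructures (for instance, another plane of $\mathrm{PG}_3(4)$ meeting $H$ along a line, or $H$ together with a line of $\mathrm{AG}_3(4)$) and assign weights $2$ and occasional $3$ to the surviving points so that the final $v$ is met. The principal obstacle---most acute in these two cases---is to ensure that every one of the $357$ lines of $\mathrm{PG}_3(4)$ induces a type listed in Lemma~\ref{ingredients}; since several small $\{3,4\}$-GDD types (including $2^2$, $2^1 3^1$, $2^2 3^1$, $2^2 3^2$, and $2^5$) fail to exist, the truncation and the support of the weight-$3$ assignment must be chosen so that no line meets them in a forbidden pattern. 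Once a valid configuration has been identified, verifying it reduces to a routine case analysis, organized by how each line intersects the truncated set and the weight-$3$ subset.
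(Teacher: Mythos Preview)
Your treatment of $v=130$ and $v=142$ is correct and essentially matches the paper's: the paper also works in $\mathrm{AG}_3(4)$, weighting one point by $3$ and the rest by $2$ for $v=130$, and for $v=142$ it keeps all $64$ points and gives weight $3$ to all but three collinear points of a plane (rather than truncating one point and giving weight $3$ to the other fifteen, as you do). Both variants produce only ingredient types from Lemma~\ref{ingredients}, so either works.

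The genuine gap is at $v=114$ and $v=126$. You do not actually produce a weighting; you only describe the shape of a search and list the forbidden types. That is not a proof: the whole content of this proposition lies in exhibiting, for each $v$, a concrete truncation-and-weight pattern for which \emph{every} line of $\mathrm{PG}_3(4)$ lands on an available ingredient. Your suggested directions (``another plane meeting $H$ along a line'' or ``$H$ together with a line of $\mathrm{AG}_3(4)$'') are not obviously compatible with avoiding $2^2$ and $2^1 3^1$ on the many secant lines, and you give no argument that a valid configuration exists. The paper resolves this by truncating \emph{two} planes of $\mathrm{PG}_3(4)$ down to carefully chosen substructures: three collinear points (for $v=114$) or a Fano subplane (for $v=126$) left in one plane, each weighted $3$, and four collinear points left in a second plane, with every other surviving point weighted $2$; one then checks that all residual lines have types among $2^3,3^3,2^4,2^33^1,2^43^1$. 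To complete your proof you must supply configurations at this level of specificity and verify the line types.
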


\begin{proof}
We work from PG$_3(4)$, a PBD$(85,\{5\})$ of dimension three.
To get $v=114$ and $126$, truncate all but 3 collinear points or a Fano subplane from one plane PG$_2(4)$, and all but 4 collinear points of another plane. Give weight 3 to the points remaining in the first plane, and weight 2 to all other points. Apply Construction~\ref{wfc}, using $\{3,4\}$-GDDs of types $2^3,3^3,2^4,2^33^1,2^43^1$ as ingredients (see (a) and (b) of Lemma~\ref{ingredients}).  Add one point and fill groups with blocks incident at this point as in Construction~\ref{fill-gps}.
For the other values we start with AG$_3(4)$, a PBD$(64,\{4\})$ of dimension three (and truncation of the projective space above). For $v=130$, give all points weight 2 except one which we give weight 3 and use ingredient GDDs as above.  For 142, give all but 3 collinear points of some plane weight 3, and all remaining points weight 2.  This time we also use an ingredient GDD of type $2^13^3$. Again, add a point and fill the groups.
\end{proof}

We also obtain a PBD$(124,\{3,4\})$ of dimension three by giving weight 4 to a Steiner space of order 31.

\begin{prop}
There exists a PBD$(124, \{3,4\})$ of dimension three.
\end{prop}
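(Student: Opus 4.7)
The plan is to apply Wilson's fundamental construction (Construction~\ref{wfc}) with uniform weight $4$ to a Steiner space on $31$ points, which exists by Teirlinck's theorem mentioned in the introduction (since $31$ appears in the small admissible list $\{15,27,31,39\}$). Because every block of the master design has size $3$, the only ingredient GDD needed is a $3$-GDD of type $4^3$, which is supplied by Lemma~\ref{ingredients}(a).

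The output of the construction is a $3$-GDD of type $4^{31}$ on $4\cdot 31 = 124$ points. Since every point receives positive weight, the weighting is nondegenerate, so Proposition~\ref{wfc-dim} applies and the resulting GDD has dimension at least three, with the further property that every set of three points lies in a proper sub-GDD meeting each group in either all or none of its points. To finish, fill each group of size $4$ with a single block of size $4$, which is a (trivial) PBD$(4,\{3,4\})$. By Construction~\ref{fill-gps} and the remark following Proposition~\ref{wfc-dim}, this yields a PBD$(124,\{3,4\})$ of dimension at least three, and the note following Theorem~\ref{main} (invoking the reduction machinery of \cite{DN}) then gives dimension exactly three.

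There is no real obstacle in this argument; all the required ingredients are already collected in Section~2, and the only nontrivial input, the existence of a Steiner space of order $31$, is classical. The verification reduces to checking arithmetic ($4\cdot 31 = 124$), the availability of a $3$-GDD of type $4^3$, and nondegeneracy of the weighting, each of which is immediate.
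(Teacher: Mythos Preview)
Your proof is correct and follows exactly the same approach as the paper, which simply states that the design is obtained by giving weight $4$ to a Steiner space of order $31$. You have supplied the routine details (ingredient $3$-GDD of type $4^3$, nondegeneracy, filling groups with blocks of size $4$) that the paper leaves implicit.
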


Four more orders can be constructed from weightings of PG$_3(3)$.

\begin{prop}
There exists a PBD$(v, \{3,4\})$ of dimension three for all $v\in \{96,100,102,106\}$.
\end{prop}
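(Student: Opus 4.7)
The plan is to apply Wilson's fundamental construction (Construction~\ref{wfc}) to PG$_3(3)$, a PBD$(40,\{4\})$ of dimension three, taking a different weighting for each of the four target orders. For each $v \in \{96,100,102,106\}$ we assign weights in $\{2,3,4\}$ to the $40$ points of PG$_3(3)$ (with some possibly given weight zero, i.e.\ truncated) so that (a) the total weight equals $v$, or $v-1$ when an extra point is later adjoined to fill groups; (b) the points of nonzero weight are not contained in any proper flat of PG$_3(3)$; and (c) for every $4$-point line $\ell$ of PG$_3(3)$, the multiset of weights on $\ell$ corresponds to an ingredient GDD from Lemma~\ref{ingredients}. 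Once (a)--(c) hold, Proposition~\ref{wfc-dim} gives a GDD of dimension at least three, and Construction~\ref{fill-gps} supplies the PBD fills: a group of size $3$ or $4$ is filled by a single block, while groups of size $2$ or $3$ are filled via triples or quadruples through a common adjoined point. The result is a PBD$(v,\{3,4\})$ of dimension three.

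The central difficulty is condition (c). The ingredients of Lemma~\ref{ingredients} admit only the monochromatic line-types $w^3$ and $w^4$ for $w\in\{2,3,4\}$, together with the mixed types $2^3 3^1$, $2^1 3^3$, $2^3 4^1$, $2^1 4^3$, $3^3 4^1$, and $3^1 4^3$; any other multiset — in particular the balanced $2^2 3^2$, $3^2 4^2$, the unbalanced $2^2 3^1, 2^1 3^2, 3^2 4^1, 3^1 4^2$, or any type arising from only two groups — fails the standard divisibility check and admits no suitable $\{3,4\}$-GDD. The strategy is to align the weight classes with the subspace structure of PG$_3(3)$: fixing a plane $\pi$ and assigning its $13$ points a common weight forces lines inside $\pi$ to be monochromatic, while a carefully chosen weighting on the $27$ off-$\pi$ points (the affine complement AG$_3(3)$) arranges the three off-$\pi$ points of every line through $\pi$ to either share a weight, be entirely truncated, or assemble one of the permitted mixed types such as $3^3 4^1$ or $3^1 4^3$. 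A small amount of casework in choosing which off-$\pi$ points are re-weighted or truncated (typically along an AG-subspace or the points lying over a sub-line of $\pi$) produces weightings with totals $96$, $100$, $102$, and $106$.

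Given any such weighting, verification of (c) reduces to an inspection of the two line types of PG$_3(3)$ — the lines contained in $\pi$ and the lines meeting $\pi$ in a single point — and condition (b) is automatic, since in each case more than $13$ points carry nonzero weight and hence cannot be contained in any plane, line, or point of PG$_3(3)$. The hardest step will be the bookkeeping needed to exhibit a concrete weighting for every $v$ that simultaneously hits the correct total and avoids all forbidden line-multisets; once these are written down, the rest of the argument is routine application of Constructions~\ref{wfc} and \ref{fill-gps}.
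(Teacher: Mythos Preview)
Your framework is exactly the paper's: weight PG$_3(3)$ using Construction~\ref{wfc} with ingredients from Lemma~\ref{ingredients}, check nondegeneracy via Proposition~\ref{wfc-dim}, and fill groups via Construction~\ref{fill-gps}. Your ingredient analysis (which line-multisets are realizable and which, like $2^23^2$, are not) is also correct.

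The gap is that you never actually produce the weightings; you explicitly defer ``the bookkeeping needed to exhibit a concrete weighting for every $v$'' to later. That bookkeeping \emph{is} the proof, and the direction you sketch---keeping the plane $\pi$ at constant weight while varying weights on the $27$-point affine complement---makes it harder than necessary: mixed weights $\{2,3\}$ on the affine part create $2^23^2$ lines unless the weight classes are affine subspaces, and a short check shows the attainable totals then miss $\{96,100,102,106\}$. The paper's choices reverse your roles. For $v\in\{100,102,106\}$ it gives all $27$ affine points weight~$3$, truncates $4$, $3$, or $1$ \emph{collinear} points of $\pi$, and gives the remaining plane points weight~$2$; every surviving line then has type in $\{2^3,2^4,3^3,2^13^3\}$, and adding one point before filling yields $81+2(13-k)+1=108-2k$ for $k=4,3,1$. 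For $v=96$ it truncates $\pi$ down to a single line~$\ell$, gives three points of $\ell$ weight~$4$ and everything else weight~$3$ (line-types $3^3,3^4,3^34^1,3^14^3$), and fills the resulting groups of size $3$ and $4$ as blocks directly, with no extra point. Supplying these explicit weightings (or equivalent ones) is what your argument is missing.
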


\begin{proof}
Start with PG$_3(3)$, a PBD$(40,\{4\})$ of dimension three.  The points can be partitioned into a copy of AG$_3(3)$ and a plane PG$_2(3)$. Give each of the 27 points of the affine space weight 3. In the plane, give 1, 3, or 4 collinear points weight 0, and all other points weight 2. Apply Construction~\ref{wfc} using $\{3,4\}$-GDDs of types $2^3,3^3,2^4,2^13^3$ as ingredients.  To the resulting GDDs, add a point and fill groups with blocks of size $3$ or $4$. This gives the largest three values. For $v=96$, truncate from PG$_3(3)$ all points of a plane except those on one line $\ell$. Give all points weight 3, except three of the points on $\ell$, which are given weight 4, using $\{3,4\}$-GDDs of types $3^3,3^4,3^34^1,3^14^3$ as ingredients (see Lemma~\ref{ingredients}). Finally, turn groups of the resulting GDD into blocks.
\end{proof}

Several smaller values come from truncations of projective and affine spaces of dimension three. 

\begin{prop}[\cite{DN}]
There exists a PBD$(v,\{3,4\})$ of dimension three for $v \in \{28,30,31,36,37,$ $39,40,60,61,63,64\}$.
\end{prop}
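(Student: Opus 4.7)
The plan is to realize each of these eleven orders by either recognizing a classical dimension-three design or by truncating or extending one. Two of the orders are immediate: $v=40$ and $v=64$ are given by PG$_3(3)$ and AG$_3(4)$ respectively, both of which have all blocks of size four and dimension three. Two further orders come from Teirlinck's theorem, which supplies a Steiner triple system of dimension three for each of $v\in\{31,39\}$; such a design is a PBD with block sizes in $\{3\}\subseteq\{3,4\}$.

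For $v\in\{60,61,63\}$, I would truncate AG$_3(4)$ by deleting a line (four collinear points), three points of a line, or a single point. Since any two distinct lines of an affine space meet in at most one point, each surviving line loses at most one point, so all block sizes lie in $\{3,4\}$. An entirely analogous argument applied to PG$_3(3)$ handles $v\in\{36,37\}$, by deleting a line for $v=36$ or three points of a line for $v=37$.

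For $v=28$, I would extend AG$_3(3)$ by selecting one of its $13$ parallel classes and adjoining a new point $\infty$ that completes each of the nine blocks of that class to a block of size four. For $v=30$, I would select three parallel classes, adjoin three new points $\infty_1,\infty_2,\infty_3$, each extending its own class, and append the single block $\{\infty_1,\infty_2,\infty_3\}$ in order to cover the three newly introduced pairs. Routine pair counting shows that both constructions yield valid PBDs with block sizes in $\{3,4\}$.

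The main obstacle is verifying that dimension three survives each of these operations. For the truncations this is clean: three points of the truncated design generate a flat contained in the truncation of their span in the parent geometry, which is a proper flat because the parent has dimension three. For the extensions, one must identify the possible proper flats of the augmented design and check that any three points lie in one. In the $v=28$ construction, the proper flats turn out to be the blocks of the extension and the affine planes of AG$_3(3)$, each of the latter being either unchanged or enlarged by $\infty$ to a copy of the unique PBD$(10,\{3,4\})$; any three points of the extension lie in such a flat. A parallel but more case-heavy enumeration handles $v=30$, in which one also tracks which subsets of $\{\infty_1,\infty_2,\infty_3\}$ sit in each flat.
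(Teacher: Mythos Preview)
Your approach is essentially the one the paper points to: these values arise from truncations of the three-dimensional projective and affine spaces $\mathrm{PG}_3(3)$ and $\mathrm{AG}_3(4)$, and your ``extensions'' of $\mathrm{AG}_3(3)$ for $v=28$ and $v=30$ are just these truncations described from the affine side (adding back $1$ or $3$ points of the deleted hyperplane $\mathrm{PG}_2(3)$). Your use of Teirlinck's Steiner spaces for $v=31,39$ is a legitimate alternative to the truncations $\mathrm{PG}_3(3)\setminus(\text{plane minus a line})$ and $\mathrm{PG}_3(3)\setminus\{\text{point}\}$.

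There is, however, a genuine gap in your $v=30$ construction. You write ``select three parallel classes'' without restriction, but the choice matters: the three directions must be \emph{coplanar}, i.e.\ correspond to three collinear points of the plane at infinity $\mathrm{PG}_2(3)$. If they are not coplanar, then for any $x\in\mathrm{AG}_3(3)$ the triple $\{\infty_1,\infty_2,x\}$ generates the whole design: the block $\{\infty_1,\infty_2,\infty_3\}$ forces $\infty_3$, the three lines through $x$ in the three directions then force the plane they span (which, for independent directions, is all of $\mathrm{AG}_3(3)$), and dimension three fails. In the truncation picture this condition is automatic --- keeping three \emph{non}-collinear points of the hyperplane would create blocks of size two --- so phrasing the construction as a truncation of $\mathrm{PG}_3(3)$ both fixes the gap and gives the dimension-three conclusion for free via your own observation that flats of a truncation are truncated flats.
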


There are now only seven values left to consider before completing the proof of Theorem~\ref{main}(a). We get $v\in \{48, 51, 52\}$ as in Niezen's thesis \cite{Niezen}, by truncating planes from PG$_3(4)$. The remaining four values are considered in the following proposition.

\begin{prop}
There exists a PBD$(v,\{3,4\})$ of dimension three for $v\in\{58,66,67,76\}$.
\end{prop}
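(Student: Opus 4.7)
The plan is to handle each of the four values $v \in \{58, 66, 67, 76\}$ separately via Wilson's fundamental construction (Construction~\ref{wfc}) combined with filling of groups (Construction~\ref{fill-gps}), using one of the standard dimension-three master designs---PG$_3(2)$, AG$_3(3)$, PG$_3(3)$, AG$_3(4)$, or a Steiner space of order 31 or 39---with ingredients drawn from Lemma~\ref{ingredients}.

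For $v = 67$, the decomposition $67 = 64 + 3$ suggests viewing AG$_3(4)$ as a 4-GDD of type $4^{16}$ (groups being the lines of one parallel class) and filling each group together with a common triple $C$ to form a Fano plane in which $C$ is a Fano line. Consistency across the sixteen fills amounts to the unique $1$-factorization of $K_4$, so that the six remaining Fano blocks on each $G_i \cup C$ are of the form $\{g,g',c\}$ with $\{g,g'\}\subset G_i$. Dimension three can then be verified by tracing flats: three points in $C$ generate $C$; three points contained in a single Fano$(G_i \cup C)$ generate that Fano (7 points); three non-collinear AG-points generate either their affine plane of $16$ points (proper, when the plane is transverse to the parallel direction) or, if the generated plane $\pi$ contains the parallel direction, the $19$-point flat $\pi \cup C$, which is easily checked to be closed under all blocks. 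For $v = 66$, $58$, and $76$, analogous but distinct constructions are required: naive truncation of one point from the $v=67$ design creates blocks of size two and so fails, and a different master or weighting must be chosen---for instance, weighting PG$_3(2)$ by 5 (giving $75$) and adjoining one point via Construction~\ref{fill-gps} for $v = 76$, or weighting AG$_3(3)$ or PG$_3(3)$ with a small added flat for $v = 58$ and $v = 66$.

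The two main obstacles are arithmetic and structural. Arithmetically, many natural weightings force ingredient GDDs of types not in Lemma~\ref{ingredients}---such as $\{3,4\}$-GDDs of type $2^2 3^2$ or $3^1 1^3$, neither of which exists---so the master, weights, and any added flats must be matched carefully. The more delicate obstacle is dimension: Proposition~\ref{wfc-dim} preserves dimension under weighting, but filling groups with a common flat can in principle collapse flats in the resulting PBD, so for each value the flat generated by three arbitrary points must be checked proper. This last step, as illustrated by the $v = 67$ case sketched above, is likely the most intricate part of the argument, and will need a separate verification for each of the remaining three values.
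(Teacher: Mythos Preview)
Your sketch diverges substantially from the paper's route, and while the $v=67$ idea is attractive, there are genuine gaps elsewhere.

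For $v=76$ your proposed construction does not go through. Weighting PG$_3(2)$ by $5$ yields a $3$-GDD of type $5^{15}$ (which, incidentally, requires a $3$-GDD of type $5^3$ as ingredient---easy enough, but not in Lemma~\ref{ingredients}). The fatal problem is the ``adjoin one point'' step: Construction~\ref{fill-gps} needs a PBD$(g+1,\{3,4\})$ for each group size $g$, and here $g+1=6$, precisely the excluded value in Theorem~\ref{pbd345}(b). So the groups cannot be filled and no PBD$(76,\{3,4\})$ results. For $v=58$ and $66$ you offer only ``weighting AG$_3(3)$ or PG$_3(3)$ with a small added flat,'' which is not specific enough to evaluate; without concrete weights matching the ingredient list, this is not yet an argument.

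Your $v=67$ construction (AG$_3(4)$ as a $4$-GDD of type $4^{16}$, filled against a common triple $C$) is in fact sound, but your dimension check is incomplete: you omit the mixed case of two AG-points in distinct groups together with one point of $C$. A careful trace shows that such a triple generates $\pi\cup C$ where $\pi$ is the unique plane through the two AG-points containing the parallel direction---so the construction does work---but this case must be written out. More importantly, this kind of ad-hoc flat analysis is exactly what the paper avoids. The paper's proof treats all four values uniformly from PG$_3(3)$: truncate either three collinear points, or a plane down to one or four collinear points, then assign weight $2$ everywhere except weight $3$ at the distinguished point(s), using only ingredients $2^3,2^4,2^33^1,2^13^3$ from Lemma~\ref{ingredients}. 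Because this is Wilson's fundamental construction on a dimension-three master with a nondegenerate weighting, Proposition~\ref{wfc-dim} and the remark following it guarantee dimension three automatically after filling---no case analysis on flats is needed at all.
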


\begin{proof}
We first consider $v=76$.  Start with PG$_3(3)$ and truncate 3 collinear points.  Give the remaining point from that block weight 3 and all other points weight 2. Apply Construction~\ref{wfc} using $\{3,4\}$-GDDs of types $2^3,2^4,2^33^1$ as ingredients. To finish the construction, add a point and fill the groups with blocks of size $3$ or $4$.  For the smaller three values, truncate all but 1 or 4 collinear points from a plane, giving all points weight 2 except for the single point, or 3 or 4 of the 4 points, which will get weight 3. Apply Wilson's Fundamental Construction again, using the same ingredients as before, as well as possibly type $2^13^3$.  The resulting GDDs have $57,65$ or $66$ points and groups sizes in $\{2,3\}$, which are to be filled as before.
\end{proof}

\subsection{Block sizes in $K=\{3,5\}$} We now move on to Theorem~\ref{main}(b).  In what follows we consider only odd integers $v$.
In fact, for $v\geq45$, $v\not\in\{51,67,69,145\}$, we actually only need to consider the congruence class $v\equiv 5\pmod{6}$, as Steiner spaces cover the other possibilities. 

As before, we split the proof into a few separate constructions. First, all but finitely many cases can be handled as in Proposition~\ref{finite-34pbd}, where weight 2 and ingredient $\{3,5\}$-GDDs of type $2^3$, $2^4$ and $2^5$ are used. 

\begin{prop}[see also \cite{DN}]
\label{finite-35pbd}
There exists a PBD$(v,\{3,5\})$ of dimension three for $v=29, 59$, and all odd $v \ge 97$.
\end{prop}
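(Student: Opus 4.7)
The plan is to mirror Proposition~\ref{finite-34pbd}, replacing weight $3$ with weight $2$ and using $\{3,5\}$-GDD ingredients in place of $\{3,4\}$-GDD ingredients. Starting from a PBD$(u,\{3,4,5\})$ of dimension three furnished by Theorem~\ref{pbd345a}, I would apply the constant weighting $\omega\equiv 2$ (nondegenerate, since every point has positive weight) and invoke Construction~\ref{wfc} with the $\{3,5\}$-GDDs of types $2^3,2^4,2^5$ from Lemma~\ref{ingredients}(c) as ingredients. By Proposition~\ref{wfc-dim}, the result is a $\{3,5\}$-GDD of type $2^u$ of dimension at least three; adjoining one new point and filling each group of size two with the unique PBD$(3,\{3,5\})$ (a single triple through the added point) then yields a PBD$(2u+1,\{3,5\})$ of dimension three.

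This construction immediately handles $v=59$ by taking $u=29$ (admissible since $29\ge 27$ and $29$ is not among the exceptions of Theorem~\ref{pbd345a}), and it handles every odd $v\ge 97$ by taking $u=(v-1)/2\ge 48$, which is comfortably above the final exception at $47$.

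The case $v=29$ requires a separate ad hoc construction, since $u=14$ lies below the admissible range of Theorem~\ref{pbd345a}. My plan here is to build a PBD$(29,\{3,5\})$ of dimension three starting from a small dimension-three design---for instance, by truncating a point from a Steiner space on $15$ points (such as $\mathrm{PG}_3(2)$), weighting the resulting $3$-GDD of type $2^7$ by $2$ to obtain a $\{3,5\}$-GDD of type $4^7$ on $28$ points, and then adjoining one point and filling each group with a $5$-block through it. The main obstacle is dimension preservation: a naive truncation-and-weighting in this style produces a GDD that can fail the ``meets each group in zero or all points'' condition of Proposition~\ref{wfc-dim} whenever three output points project to pairs whose corresponding lines through the deleted point are non-coplanar in $\mathrm{PG}_3(2)$, since iterated closure using the inflated $3$-blocks may cover all seven groups. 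A careful choice of base design, of ingredient blocks, or of truncation pattern will therefore be needed to ensure that every three-point subset of the final PBD lies in a proper flat, and verifying this is the technical crux of the proposition.
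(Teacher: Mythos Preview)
Your main construction---giving weight $2$ to a dimension-three PBD$(u,\{3,4,5\})$ from Theorem~\ref{pbd345a}, using the ingredients $2^3,2^4,2^5$, and adding a point to fill groups---is exactly what the paper does, and it correctly handles $v=59$ (via $u=29$) and every odd $v\ge 97$ (via $u\ge 48$).

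The gap is entirely at $v=29$, and you have diagnosed it accurately: $u=14$ is below the range of Theorem~\ref{pbd345a}, and your truncate-then-weight construction from $\mathrm{PG}_3(2)$ falls outside the hypotheses of Proposition~\ref{wfc-dim}. Concretely, reinterpreting the truncation as a weighting of $\mathrm{PG}_3(2)$ with one point of weight $0$ and the rest of weight $2$ would require a $\{3,5\}$-GDD of type $2^2$ as an ingredient on each line through the deleted point, and no such GDD exists. Your alternative of weighting the $3$-GDD of type $2^7$ directly is not covered by Proposition~\ref{wfc-dim} (which is stated only for PBDs), and your own analysis---three output points landing in three groups whose corresponding lines through the deleted vertex are non-coplanar---shows exactly where the closure argument can fail, since the projected flat in $\mathrm{PG}_3(2)$ is then the whole space. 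So the ``technical crux'' you flag is not a detail to be filled in but a genuine obstruction to this particular plan.

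The paper does not construct $v=29$ at all: the proposition carries the citation ``see also \cite{DN}'' precisely because this value (and the method's viability in general) is inherited from prior work. So rather than attempting an ad~hoc construction here, you should simply cite \cite{DN} for $v=29$, as the paper does.
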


A range of orders can be constructed by weighting PG$_3(3)$ and PG$_3(4)$.

\begin{prop}
There exists a PBD$(v,\{3,5\})$ of dimension three for all $v\in \{65,67,69,71,77,83\}$.
\end{prop}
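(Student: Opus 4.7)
The plan is to realize each $v \in \{65,67,69,71,77,83\}$ by a weighting of $\text{PG}_3(3)$ -- the PBD$(40,\{4\})$ of dimension three -- using weights drawn from $\{0,2,4\}$, then applying Construction~\ref{wfc}, adjoining one new point $\infty$, and filling each $2$-group with a triple and each $4$-group with a pentagon through $\infty$. Since $\text{PG}_3(3)$ has block size $4$, the ingredients demanded by Construction~\ref{wfc} are $\{3,5\}$-GDDs of type $2^a 4^b$ with $a+b \leq 4$. The $3$-GDDs of Lemma~\ref{ingredients}(a) supply the usable types $2^4$, $2^3 4^1$, $2^1 4^3$, $4^4$, together with the $3$-group types $2^3$, $4^3$ arising on lines with a single truncated point; however the types $2^1 4^1$, $2^2$, $4^2$, $2^2 4^1$, $2^1 4^2$, $2^2 4^2$ do not exist as $\{3,5\}$-GDDs and must not appear on any line.

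Writing $W_i$ for the set of weight-$i$ points, the output order is $v = 81 - 2|W_0| + 2|W_4|$. I would fix a plane $P \subset \text{PG}_3(3)$ and a distinguished line $M = \{p_1,p_2,p_3,p_4\} \subset P$, confining both $W_0$ and $W_4$ to $P$. The six targets are realized by $(W_0,W_4)=(\emptyset,\{p_1\})$ for $v=83$; $(\{p_1,p_2,p_3\},\{p_4\})$ for $v=77$; $(P\setminus M,\{p_1\})$, $(P\setminus M,\{p_1,p_2,p_3\})$, $(P\setminus M,M)$ for $v=65,69,71$ respectively; and $(P\setminus\{p_1,p_2,p_3\},\{p_1,p_2,p_3\})$ for $v=67$.

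For each of these six weightings, verifying that every line $L$ of $\text{PG}_3(3)$ displays an allowed pattern is a short case split on the position of $L$ relative to $P$ and $M$ -- namely $L=M$; $L \subset P$ with $L \ne M$ (classified by the point $L \cap M$); $L$ meeting $P$ in a single point inside or outside $W_0 \cup W_4$; or $L$ skew to $P$. Only two standard incidence facts are used: two coplanar lines meet in exactly one point, and a line not contained in a plane meets that plane in at most one point. Nondegeneracy is automatic since the positive-weight set always contains the $27$ points of $\text{PG}_3(3)$ lying outside $P$, and so cannot sit in any proper flat. Proposition~\ref{wfc-dim} then gives the resulting GDD dimension at least three, and Construction~\ref{fill-gps} preserves this upon filling. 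The main technical obstacle is the tightness of the forbidden list -- especially the absence of a $\{3,5\}$-GDD of type $2^1 4^1$, which forces $W_2$ and $W_4$ never to meet a common line in exactly one point each; this is precisely what motivates concentrating $W_0 \cup W_4$ inside a single plane.
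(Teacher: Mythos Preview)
Your proposal is correct and is essentially the same construction as the paper's: for each target $v$ you weight $\mathrm{PG}_3(3)$ with weights in $\{0,2,4\}$, confining the zero- and four-weight points to a single plane and a fixed line $M$ within it, then add a point and fill groups. Translating your weight-$0$ points into truncated points, your six choices of $(W_0,W_4)$ coincide exactly with the paper's six cases (for $v=77$ and $83$ the paper truncates $0$ or $3$ points of a line and gives one surviving point weight $4$; for $v\in\{65,67,69,71\}$ it truncates a plane down to three or four collinear points and distributes weights $2$ and $4$ among those). One harmless slip: in $\mathrm{PG}_3(3)$ a plane is a hyperplane, so no line is skew to $P$; that case in your split is simply vacuous.
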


\begin{proof}
Start with PG$_3(3)$, a PBD$(40,\{4\})$ of dimension three. To get the larger two values, truncate 0 or 3 collinear points giving exactly one point weight 4 (the remaining point from the truncated line in the latter case), and all other points weight 2. Apply Construction~\ref{wfc}, using 3-GDDs of types $2^3,2^4,2^34^1$ from Lemma~\ref{ingredients} as ingredients. Add a point and fill groups with blocks at this new point. For the smaller values, truncate all but three or four collinear points of a plane. If three, give these points weight 4 and all others weight 2. If four, give one, three, or all of these four points weight 4, and the rest weight 2. Apply Wilson's Fundamental Construction again, using the same ingredient GDDs, as well as possibly 3-GDDs of types $4^3,4^4$ and $2^14^3$. Add a point and fill groups with blocks.
\end{proof}

\begin{prop}
There exists a PBD$(v,\{3,5\})$ of dimension three for $v=89$ and $95$.
\end{prop}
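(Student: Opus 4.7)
My plan is to apply Wilson's Fundamental Construction (Construction~\ref{wfc}) with a strictly positive---hence nondegenerate---weighting on a dimension-three master design, then finish by Construction~\ref{fill-gps}.  Proposition~\ref{wfc-dim} together with the observation immediately following it then preserves dimension three in both cases, so the real work is just to choose weights and ingredient GDDs.

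For $v=89$, I would start from PG$_3(3)$, a PBD$(40,\{4\})$ of dimension three.  Pick any line $\ell$ and assign weight $4$ to its four points and weight $2$ to the remaining $36$ points.  Since two distinct lines of a projective space meet in at most one point, the weight-patterns appearing on the $\{4\}$-blocks of PG$_3(3)$ are exactly $2^4$, $2^3 4^1$, and $4^4$, and $3$-GDDs of all three types are listed in Lemma~\ref{ingredients}(a).  The total weight is $36\cdot 2+4\cdot 4=88$; adjoin one new point and fill each group (of size $2$ or $4$) with a triple or $5$-block through the new point to obtain PBD$(89,\{3,5\})$.

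For $v=95$, I would take PG$_3(2)$, the STS$(15)$ of dimension three, and give every one of its $15$ points weight $6$.  The only ingredient required is a $3$-GDD of type $6^3$, i.e.\ a Latin square of order $6$, which is elementary.  This produces a $3$-GDD on $90$ points with $15$ groups of size $6$.  Adjoin five new points $H$ and fill each group together with $H$ by the unique PBD$(11,\{3,5\})$, arranging so that in every filling the single block of size $5$ is exactly $H$.  Because $H$ is itself a flat of PBD$(11,\{3,5\})$ and is placed identically in each filling, the third bullet of Construction~\ref{fill-gps} applies and yields PBD$(95,\{3,5\})$.

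The main subtlety I anticipate is the dimension check for $v=95$, where the fill uses a common flat of order $5$ rather than a single adjoined point.  This is covered by the assertion following Proposition~\ref{wfc-dim}; concretely, for any three points in the final PBD one takes $H$ together with the point set of a proper sub-GDD of the WFC output containing whichever of those three points lie in the original space, and checks directly that this union is a proper flat.
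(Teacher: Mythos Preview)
Your argument is correct, but it differs substantially from the paper's.  The paper works from PG$_3(4)$, a PBD$(85,\{5\})$, and gives $i\in\{2,5\}$ points weight~$3$ and all others weight~$1$, invoking the $\{3,5\}$-GDDs of type $1^i 3^{5-i}$ from Lemma~\ref{ingredients}(c); the groups of size~$3$ are then simply turned into blocks (no new points added).  This gives $85+2i\in\{89,95\}$ directly.

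Your route is perfectly valid but uses different master designs (PG$_3(3)$ for $89$, PG$_3(2)$ for $95$), different ingredients, and---for $95$---the third bullet of Construction~\ref{fill-gps} with a common flat $H$ of order~$5$.  The $3$-GDD of type $6^3$ you invoke is not in Lemma~\ref{ingredients}, but as you note its existence is elementary.  Your dimension check for the $h=5$ fill is sound: the set $Y\cup H$, with $Y$ the proper sub-GDD promised by Proposition~\ref{wfc-dim}, is indeed a proper flat containing any three chosen points.  The paper's approach has the advantage of uniformity (one master, one filling rule, no added points), while yours has the mild advantage of using only $3$-GDD ingredients for $v=89$ rather than the $\{3,5\}$-GDDs of Lemma~\ref{ingredients}(c).
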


\begin{proof}
Start with PG$_3(4)$, a PBD$(85,\{5\})$ of dimension three.  Give $i$ points weight 3, and the rest of the points weight 1 for $i\in \{2,5\}$.  Apply Construction~\ref{wfc} with ingredients from Lemma~\ref{ingredients}(c). Turning the groups of size three into blocks completes the proof.
\end{proof}

The only remaining value to consider here is $v=53$, which is handled by a truncation.

\begin{prop}
There exists a PBD$(53,\{3,5\})$ of dimension three.
\end{prop}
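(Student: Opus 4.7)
The plan is to truncate PG$_3(4)$ to a carefully chosen 53-point subset. Specifically, fix a line $L$ of PG$_3(4)$ and let $\pi_1, \pi_2, \pi_3$ be three of the five planes through $L$. Set $X = \pi_1 \cup \pi_2 \cup \pi_3$; since each $\pi_i$ contains $21$ points and any two distinct planes through $L$ have intersection exactly $L$ (five points, which is also the triple intersection), inclusion-exclusion gives $|X| = 3\cdot 21 - 3\cdot 5 + 5 = 53$. Starting from PG$_3(4)$, a PBD$(85,\{5\})$ of dimension three, I would delete all points outside $X$ and replace each line by its intersection with $X$ in the sense described before Construction~\ref{wfc}.

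The main content is checking that the truncated design is a PBD$(53,\{3,5\})$. A line $\ell$ of PG$_3(4)$ either lies in some $\pi_i$ (and is preserved as a $5$-block), or lies in no $\pi_i$, in which case it meets each $\pi_i$ in exactly one point; those three meeting points coincide precisely when $\ell$ meets $L$ (leaving a single surviving point, so the block is discarded) and are otherwise distinct (yielding a $3$-block). Blocks of sizes $2$ or $4$ never arise. To confirm pair coverage, I split on whether $x,y \in X$ share a common $\pi_i$: if they do, their PG$_3(4)$-line lies in that $\pi_i$ and is a $5$-block; otherwise we may assume $x \in \pi_i \setminus L$ and $y \in \pi_j \setminus L$ with $i \neq j$, and the joining line lies in no $\pi_k$ and cannot meet $L$ (else one of $x,y$ would itself lie on $L$), so it becomes a $3$-block.

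For dimension at least three, I would show that any three points $x,y,z \in X$ lie in a proper flat. If they are collinear in PG$_3(4)$, they are already contained in one PBD-block. Otherwise they span a plane $\pi$ of PG$_3(4)$; every PG$_3(4)$-line joining two points in $\pi$ is contained in $\pi$, so iterating the closure of pairs keeps us inside $\pi \cap X$, yielding $\langle x,y,z\rangle \subseteq \pi \cap X$. If $\pi = \pi_i$ for some $i$, then $|\pi \cap X| = 21$; otherwise $L \not\subseteq \pi$, so $\pi$ meets each $\pi_i$ in a line through the single point $p = \pi \cap L$, giving $|\pi \cap X| \le 3\cdot 5 - 3\cdot 1 + 1 = 13$. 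In either case $\langle x,y,z\rangle$ is a proper subset of $X$.

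The only delicate step, and where I expect the main obstacle, is the pair-coverage verification: one must ensure that no pair of $X$-points is lost to a discarded singleton line. This reduces to the projective fact that two distinct points of $X$ cannot both lie on a PG$_3(4)$-line meeting $L$ unless one of them lies on $L$ itself, which follows directly from how the three meeting points $\ell \cap \pi_i$ behave. Once this is settled, the dimension bound is a short span-in-a-plane argument and no further appeal to Proposition~\ref{wfc-dim} or Construction~\ref{fill-gps} is needed.
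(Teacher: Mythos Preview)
Your proposal is correct and is essentially the same construction as the paper's, just described from the complementary viewpoint: the paper truncates two planes of PG$_3(4)$ down to their common line $L$, which leaves precisely the union of the other three planes through $L$---your set $X=\pi_1\cup\pi_2\cup\pi_3$. Your write-up is in fact more careful than the paper's terse proof, explicitly verifying the block sizes and the dimension bound; the only minor looseness is the clause ``otherwise $L\not\subseteq\pi$,'' which tacitly uses that three non-collinear points of $X$ cannot span one of the two deleted planes (since such a plane meets $X$ only in $L$).
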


\begin{proof}
Truncate two planes from PG$_3(4)$, leaving behind only their common block. This leaves blocks of size 3 across the remaining three planes, and blocks of size 5 within the remaining planes.  The result is a PBD$(v,\{3,5\})$ with $v=85-2\times 16 = 53$ points.
\end{proof}

This completes our constructions of PBDs of dimension three with block sizes in $\{3,4\}$ and $\{3,5\}$.  Theorem~\ref{main} is proved except for nonexistence in the case $v=33$, $K=\{3,5\}$, which we address in the next section.

\section{Nonexistence}

The second author and Niezen showed in ~\cite{DN} the nonexistence of PBDs of dimension three with block sizes in $\{3,4,5\}$ for $v=32$, and all $v<27,v\ne15$. Since both sets of block sizes considered in this paper are subsets of $\{3,4,5\}$, the nonexistence results from ~\cite{DN} carry over to our work. In this section we add to those results the nonexistence of a PBD$(33,\{3,5\})$ of dimension three. The proof relies on a well-known upper bound for the size of proper flats in a design. In what follows, we say that a block `touches' a flat when it intersects that flat in a single point.  A proof of the following can be found in \cite{DN}.

\begin{lemma}
\label{subsystem-bound}
In a PBD$(v,\Z_{\ge 3})$ with a proper flat $W$ of size $w$, we have $v \ge
2w+1$, with equality if and only if every block intersects $W$, and all
blocks which touch $W$ have size exactly three.
\end{lemma}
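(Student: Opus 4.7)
The plan is a standard double counting anchored at a point outside the flat. Let $T := X \setminus W$ and fix any $x \in T$, which exists since $W$ is proper. For each $y \in W$, consider the unique block $B_y := \langle x, y \rangle$. Since $x \notin W$, the block $B_y$ is not contained in $W$, and by the flat property it cannot contain a second point of $W$; hence $B_y \cap W = \{y\}$. As $y$ varies, the $B_y$ are pairwise distinct and meet only at $x$ (any two blocks share at most one point).

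For the inequality, each $B_y$ has $|B_y| \geq 3$, so contains at least one point of $T \setminus \{x\}$. Because the blocks $B_y$ meet only at $x$, these contributions are disjoint within $T \setminus \{x\}$, yielding $|T \setminus \{x\}| \geq w$ and therefore $v = w + |T| \geq 2w+1$.

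For the equality characterization I would argue both directions. Suppose first that $v = 2w+1$. Then the preceding count is tight: every $B_y$ has $|B_y \cap T| = 2$, so $|B_y| = 3$, and $y \mapsto z_y$, where $z_y$ is the unique element of $B_y \setminus \{x,y\}$, is a bijection $W \to T \setminus \{x\}$. If some block $B$ were disjoint from $W$, repeating the construction with the basepoint chosen in $B$ would produce a second point $z$ of $B$ that already lies in some $B_y$ through that basepoint; the two blocks $B$ and $B_y$ would then share two points, forcing $B = B_y$ and contradicting $B \cap W = \emptyset$. Finally, any block that touches $W$ at some $y$ contains a point $x \in T$, so it equals the corresponding $B_y$, which has size three. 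Conversely, assume every block meets $W$ and every touching block has size exactly three. For a fixed $x \in T$, each $B_y$ is now a $3$-block $\{x, y, z_y\}$, and the hypothesis that every block meets $W$ forces every $z \in T \setminus \{x\}$ to occur as some $z_y$, via the block $\langle x, z \rangle$ (which must touch $W$). This bijection again gives $|T| = w+1$ and $v = 2w+1$.

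The only delicate point is the equality case's exclusion of a block disjoint from $W$, which combines the tightness of the inequality with the uniqueness of the block on two points; the remainder of the argument is routine bookkeeping, and the forward inequality itself is a one-line count.
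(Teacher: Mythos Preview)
Your argument is correct. The paper itself does not prove this lemma but simply cites \cite{DN}; your proof is the standard one and almost certainly matches that reference: fix a point outside $W$, observe that the blocks joining it to the $w$ points of $W$ are pairwise disjoint outside that point and each contributes at least one extra external point, then unwind the equality case. The only cosmetic issue is the phrasing in the ``no block misses $W$'' step --- when you say the construction ``would produce a second point $z$ of $B$,'' what you mean is that choosing the new basepoint $x'\in B$ forces every point of $T\setminus\{x'\}$, in particular any other point $z\in B$, to lie on one of the $B'_y$, whence $B=B'_y$ meets $W$; the logic is fine, just state it a bit more directly.
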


Consider any block $B$ in a PBD, say $(X, \mathcal{B})$, of dimension
at least three. For each $x \in X \setminus B$, the flat $F_x = \langle B, x\rangle$ is proper in $X$, and hence these flats partition the points in $X\setminus B$. By deleting $B$ and these flats we get a GDD whose groups partition is $\{F_x \setminus B: x \in X\}$.  This observation is used repeatedly in our structural arguments to follow.

\begin{prop}
\label{nonex-pbd33}
There does not exist a PBD$(33,\{3,5\})$ of dimension three.
\end{prop}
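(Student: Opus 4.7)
Plan: I would argue by contradiction. If a PBD$(33,\{3,5\})$ of dimension three existed, it could not be a Steiner triple system (by Teirlinck's theorem, $v=33$ is not among the Steiner space orders), so at least one block has size $5$; fix such a block $B$. The strategy is to study the flats $F_x:=\langle B,x\rangle$ for $x\in X\setminus B$; each is proper by dimension three, and by the observation just before the statement the distinct $F_x$'s partition $X\setminus B$ into groups $F_x\setminus B$.

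First I would pin down the possible sizes of $F_x$. Sub-PBDs with block sizes in $\{3,5\}$ have odd order, and by Lemma~\ref{subsystem-bound} the proper ones have size at most $16$, so a priori $|F_x|\in\{7,9,11,13,15\}$. Sizes $7$ and $9$ are ruled out: the unique PBDs on $7$ and $9$ points are the Fano plane and AG$_2(3)$, neither containing a $5$-block. Hence $|F_x|\in\{11,13,15\}$ and the group sizes lie in $\{6,8,10\}$. Summing to $|X|-|B|=28$ leaves exactly three possible multisets of group sizes: $\{6,6,8,8\}$, $\{6,6,6,10\}$, and $\{8,10,10\}$.

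Next I would classify blocks not contained in any single $F_x$ (``transversal'' blocks). If such a block contained two points of a common group, the block through that pair would lie inside the group's host flat, forcing the whole block there---a contradiction. A variant of the same argument using that $B$ is the unique pentagon of every size-$11$ flat rules out transversal $5$-blocks in all three partitions (a transversal pentagon would have at most one point per group plus at most one in $B$; its $B$-point would then be paired with each outer point, dropping the block into the corresponding $F_i$). So every transversal block is a triple with one point in each of three distinct groups, and therefore covers exactly three cross-group pairs. Divisibility by $3$ of $\binom{28}{2}-\sum_i\binom{|g_i|}{2}$ is then necessary, and computing gives $292$, $288$, and $260$ respectively; since $292$ and $260$ are not divisible by $3$, the partitions $\{6,6,8,8\}$ and $\{8,10,10\}$ are excluded, leaving only $\{6,6,6,10\}$.

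The remaining case is the main obstacle. Label the flats $F_1,F_2,F_3$ (size $11$) and $F_4$ (size $15$), with groups $g_i$. Uniqueness of PBD$(11,\{3,5\})$ forces $B$ to be the single pentagon of each $F_i$ ($i\le 3$), with the triples of $F_i$ giving a $1$-factorization of $K_6$ on $g_i$ labeled by $B$. Running the partition argument internally to $F_4$ shows $F_4$ has no proper subflat containing $B$ (since $10$ cannot be written as a sum of parts from $\{6,8\}$). Counting how the triples and pentagons of $F_4$ cover pairs between $B$ and $g_4=F_4\setminus B$ narrows the number $b_4$ of pentagons of $F_4$ to $\{3,6\}$ (the value $b_4=9$ forces a negative count of certain block types). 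To finish, I would track the transversal triples $\{y,u,v\}$ with $y\in g_4\cap P$ for a pentagon $P$ of $F_4$ and $u,v$ outer: each such triple must sit inside a size-$11$ flat through $P$, forcing rigid alignment between the outer partitions induced by different pentagons and the $1$-factorizations on $g_1,g_2,g_3$. The hard part will be pushing this bookkeeping to an explicit arithmetic or structural clash in each of the subcases $b_4=3$ and $b_4=6$, since pair-counting alone does not suffice and the rigidity must come from the global interaction of the pentagons with the outer groups.
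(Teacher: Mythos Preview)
Your reduction to the three group-size multisets $\{6,6,8,8\}$, $\{6,6,6,10\}$, $\{8,10,10\}$ is correct, and your divisibility argument on cross-group pairs is a clean way to kill the first and third; the paper in fact lists only the first two types and eliminates $\{6,6,8,8\}$ by a different mod-$3$ count (the total number of pentagons in the design), so on this preliminary step your route is both slightly different and slightly more thorough.

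The genuine gap is exactly where you flag it: the $\{6,6,6,10\}$ case. Your instinct to study the pentagons of the size-$15$ flat $F_4$ and the partitions they induce on the $18$ outer points is right, but the decisive structural fact you have not reached is that these pentagons are forced to be \emph{concurrent}. Take two pentagons $B,B'\subseteq F_4$; each partitions the $18$ outer points into three $6$-sets via its size-$11$ flats, and since a size-$11$ flat through $B$ and one through $B'$ can share at most two outer points (otherwise their intersection would contain a triple lying in both, which is impossible in the unique PBD$(11,\{3,5\})$), the two partitions refine to a $3\times 3$ grid with exactly two points per cell. A third pentagon $B''$ can then only cut this grid diagonally, and tracking which point of each pentagon matches which pair of cells forces $B,B',B''$ through a common point $\infty\in F_4$; a fourth pentagon cannot fit, so $b_4=3$ and your subcase $b_4=6$ never arises. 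With $\infty$ in hand, the finish is not further pair-counting but an explicit bad triple of generators: let $B_3$ be the unique triple of $F_4$ through $\infty$, argue that the proper flats through $B_3$ other than $F_4$ must be three copies of AG$_2(3)$ (the other two flat-size profiles are excluded by counting the $36$ uncovered outer pairs along the back diagonals), and then show that for $x\in B_3\setminus\{\infty\}$, $y\neq\infty$ on a pentagon of $F_4$, and $z$ outside $F_4$, the flat $\langle x,y,z\rangle$ is forced to contain $\infty$ and hence equals $X$. Without the concurrence at $\infty$ and this explicit choice of three points, the alignment-and-$1$-factorization bookkeeping you outline does not close.
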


\begin{proof}
Suppose, for contradiction, that such a design exists. Teirlinck showed in ~\cite{T} the nonexistence of a Steiner space of order 33, so our design necessarily has a block of size 5, say $B$. The smallest $\{3,5\}$-PBD containing a block of size 5 has order 11.  So, it follows from this and Lemma~\ref{subsystem-bound} that $B$ can only be in flats of sizes $15,13$, or $11$. This leaves us with only two possible GDD types upon deleting $B$ and its incident proper flats, namely $6^28^2$ and $6^310^1$. To eliminate the first case notice flats of size 11 contain exactly one block of size 5, and a flat of size 13 containing a block of size 5 contains exactly three. This gives that the design arising from the GDD of type $6^28^2$ has exactly five blocks of size 5, which is not possible since we need the number of blocks of size 5 to be a multiple of 3. The second case requires much deeper analysis.

Consider a flat $Y$ of size 15 containing $B$, and hence containing at least three blocks of size $5$.  The block $B$ induces a partition $\{A_1,A_2,A_3\}$ of $X \setminus Y$ into three $6$-subsets according to the flats of order 11 containing $B$.  Consider a different block $B'$ of size $5$ in $Y$.  It induces another such partition $\{A'_1,A'_2,A'_3\}$.  The unique structure of PBD$(11,\{3,5\})$ forces every block of size three in these flats to have exactly one point of $B$ and two points in $X\setminus Y$.  So for any $i$ and $j$, we have $|A_i \cap A'_j| \le 2$, or else the flats $A_i \cup B$ and $A'_j \cup B'$ would overlap in too many points.  It follows from counting that $|A_i \cap A'_j|=2$, and the 18 points of $X \setminus Y$ fall into three `rows' and three `columns' containing six points each, as shown in Figure~\ref{33-structure}.  Without loss of generality, a third block $B''$ of size $5$ can only induce a partition running across these as three `diagonals'.  So $B,B'$ and $B''$ intersect in a common point, say $\infty$.  And we see that more than three (hence at least six for divisibility) blocks of size $5$ in $Y$ is impossible, since there is not enough room for the flats of size 11 from these blocks.

\begin{figure}[htbp]
\begin{center}
\begin{tikzpicture}
\foreach \a in {-1,0,1}
 \foreach \b in {-1,0,1}
  \foreach \c in {-.2,.2}
   \filldraw (\a+\c,\b+\c) circle [radius=.05];

\draw[rounded corners] (-1.4,-1.5) rectangle (-.6,1.5);
\draw[rounded corners] (-.4,-1.5) rectangle (.4,1.5);
\draw[rounded corners] (.6,-1.5) rectangle (1.4,1.5);
\draw[rounded corners] (-1.5,-1.4) rectangle (1.5,-.6);
\draw[rounded corners] (-1.5,-.4) rectangle (1.5,.4);
\draw[rounded corners] (-1.5,.6) rectangle (1.5,1.4);

\foreach \a in {1,2,3}
 \node at (-1.9,2-\a) {$A_{\a}$};
\foreach \a in {1,2,3}
 \node at (\a-2,1.8) {$A'_{\a}$};

\draw[rounded corners] (2,-2.3) rectangle (4,2);

\node at (4.3,-1.8) {$Y$};

\node at (3,1.75) {$\infty$};
\filldraw (3,1.5) circle [radius=.05];
\filldraw (3.75,.75) circle [radius=.05];
\filldraw (3.75,0) circle [radius=.05];

\foreach \a in {2.25,2.75,3.25}
 \foreach \b in {-1.5,-.75,0,.75}
  \filldraw (\a,\b) circle [radius=.05];

\node at (2.25,-2) {$B$};
\node at (2.75,-2) {$B'$};
\node at (3.25,-2) {$B''$};
\node at (3.75,-.5) {$B_3$};

\draw (2.25,-1.5)--(2.25,.75);
\draw (2.75,-1.5)--(2.75,.75);
\draw (3.25,-1.5)--(3.25,.75);
\draw (3.75,0)--(3.75,.75);
\draw (2.25,.75) to [out=90,in=210] (3,1.5);
\draw (2.75,.75) to [out=90,in=250] (3,1.5);
\draw (3.25,.75) to [out=90,in=290] (3,1.5);
\draw (3.75,.75) to [out=90,in=330] (3,1.5);
\end{tikzpicture}
\end{center}
\caption{structure in a hypothetical PBD$(33,\{3,5\})$ of dimension three}
\label{33-structure}
\end{figure}
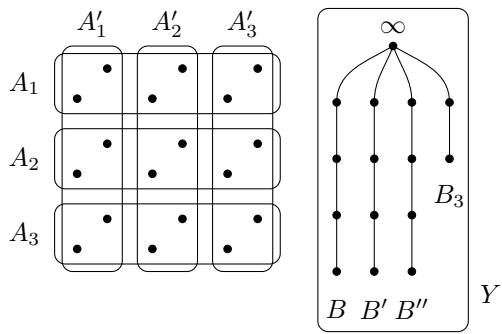

We know, then, that the flat $Y$ has three blocks $B,B',B''$ of size 5 and one block of size 3, say, $B_3$, all containing $\infty$. Notice that $\infty$ induces a partition on the 18 points outside $Y$ into nine pairs.  Any flat containing one of these groups must also then contain the point $\infty$. Also, each point in a block of size 5 contributes 9 pairs between the 18 external points, giving a total of $9\times 13=117$ external pairs covered, leaving 36 remaining.

Consider $B_3$ and its incident proper flats. There are three possible combinations of flats (other than $Y$) incident with $B_3$: three flats of size 9, three of size 7 and one of size 9, or one of size 13 and two of size 7.
We show that only the first possibility can occur.
Since the three blocks of size five cover the `rows', `columns', and one class of `diagonals', we're left to cover 36 pairs along the `back diagonals' with $B_3$. It follows that in a flat of size 7 with $B_3$ the other four points arrive as two of the pairs induced by $\infty$, aligning on the back diagonals. Considering three flats of size 7, we are left with three of the pairs, one from each back diagonal line, together in the flat of size 9 with $B_3$, such a structure necessarily covers pairs already covered with blocks of size five.  This eliminates the second case.  For the third case, a flat of size 13 would cover $\binom{10}{2}-10/2 = 40>36$ new pairs in $X \setminus Y$, and hence this case is not possible either.  

It remains to show that when $B_3$ belongs to three flats of size 9 that such a design fails to have dimension three.
Take any point $x\in B_3$, $x\ne\infty$. Consider the flat generated by $x$, a point $y\ne\infty$ on a block of size 5, and a point $z$ outside $Y$. The size of this flat is an odd integer at least 7. Since we can partition $X \setminus Y$ based on which flat a point generates with $x$ and $y$, we have the cases of a flat of size 13 and two flats of size 7, three of size 7 and one of size 9, or three of size 9. In the first case, a flat of size 13 can intersect $Y$ in exactly one block $\langle x,y \rangle$, leaving 10 external points.  By the pigeonhole principle, some two of them are in a triple with $\infty$, and thus $\langle x,y,z\rangle$ is the whole space. For the other two cases, suppose $\langle x,y,z\rangle$ has size 9. This flat, being a copy of AG$_2(3)$, contains a line $\ell$ of three points in $X \setminus Y$.  Therefore, $\ell$ is contained in an $AG_2(3)$ with $B_3$, hence $\infty$ is again generated and our subsystem $\langle x,y,z\rangle$ is not proper.  This completes the proof.
\end{proof}


We are unable to exclude the possibility of a PBD$(35,\{3,5\})$ of dimension three, but if one exists then deleting a block $B$ of size 5 induces a $\{3,5\}$-GDD of one of the following types: $6^5, 8^3 6^1, 10^1 8^1 6^2, 12^1 6^3$ or $10^3$. This list can be reduced slightly with deeper analysis.

We can quickly discard the $10^1 8^1 6^2$ case, as such a design would have five or eight blocks of size 5, contradicting that the number of such blocks must be $1 \pmod{3}$.  Additionally, using structural arguments on flat intersections it is possible to show flats of size 15 can only occur having three pairwise disjoint blocks of size 5.  Consider now the $8^3 6^1$ case.  The flats of size 13 require more blocks of size 5.  These, it can be shown, must intersect in a single point, say $\infty$, on $B$.  Let $x$ be another point on a different block of size 5, let $y$ be in the group of size 6, and consider the flat $Y=\langle \infty,x,y \rangle$.  For $Y$ to be proper, we must have $11 \le |Y| \le 17$ and also $Y \cap B = \{\infty\}$.  But then there is no way to allocate the points of $Y$ to flats induced by the GDD without more than one block occurring in one of these flats.  This eliminates the $8^3 6^1$ case.

For the still open possible types $10^3$, $6^5$ or $12^16^3$, perhaps some new ideas could either eliminate these three cases or lead to a construction.


\section{Applications}

\subsection{Triple systems of general index}
\label{triple-systems}

A balanced incomplete block design BIBD$(v,k,\lambda)$, or simply $(v,k,\lambda)$-\emph{design} is a pair $(X,\cB)$, where $X$ is a
$v$-set of \emph{points} and $\cB$ is a family of {\em blocks} where
\begin{itemize}
\item
for each $B \in \cB$, we have $B \subseteq X$ with $|B|=k$; and
\item
any two distinct points in $X$ appear together in exactly $\lambda$ blocks.
\end{itemize}
The parameter $\lam$ is often called the \emph{index} of the design.
As with pairwise balanced designs, there are divisibility conditions on the parameters of BIBDs.  The existence of a $(v,k,\lambda)$-design implies
\begin{align}
\label{bibd-local}
\lambda(v-1) &\equiv 0\pmod{k-1}, ~\text{and}\\
\label{bibd-global}
\lambda v(v-1)&\equiv 0\pmod{k(k-1)}.
\end{align}
In the case $k=3$, conditions \eqref{bibd-local} and \eqref{bibd-global} are equivalent to $\lam_{\text{min}} \mid \lam$, where
$$\lam_{\text{min}} = \begin{cases}
1 & \text{if~} v \equiv 1~\text{or}~3 \pmod{6},\\
2 & \text{if~} v \equiv 0~\text{or}~4 \pmod{6},\\
3 & \text{if~} v \equiv 5 \pmod{6},\\
6 & \text{if~} v \equiv 2 \pmod{6}.
\end{cases}$$
In this section we generalize Steiner spaces by considering the cases $\lambda\in\{2,3,6\}$, which by the above remarks essentially settles all values of $\lam$ via taking `copies' of the block family of a $(v,3,\lam_{\text{min}})$ design. Before considering these designs further, we should clarify our definition of dimension. A subdesign in a $(v,k,\lambda)$-design $(X,\cB)$ is a pair $(Y,\cB_Y)$, where $Y \subseteq X$ and $\cB_Y:=\{B \in \cB: B \subseteq Y\}$, which is $(w,k,\lambda)$-design with $w\leq v$. With `subdesign' replacing `flat', the notion of dimension extends naturally to the setting of general $\lam$; that is, the dimension equals the maximum integer $d$ such that any set of $d$ points are contained in a proper subdesign (with the same index).

In the case $k=3$ and $\lambda=2$, \eqref{bibd-local} and \eqref{bibd-global}  become $v\equiv 0,1 \pmod{3}$, the same necessary conditions for a PBD$(v,\{3,4\})$. Now, suppose we take a PBD$(v,\{3,4\})$ of dimension three, and repeat each block of size 3 twice, and replace blocks of size 4 with $(4,3,2)$-designs. The result is then a $(v,3,2)$-design of dimension three.  Hence, our existence results for PBD$(v,\{3,4\})$ of dimension three imply the existence of $(v,3,2)$-designs of dimension three.

Likewise, for $\lambda=3$ one has the same necessary conditions as for PBD$(v,\{3,5\})$, namely that $v$ be odd.  A construction follows from PBD$(v,\{3,5\})$ similar to the above, where we repeat each block of size 3 three times, and replace blocks of size 5 with $(5,3,3)$-designs.  For $\lam=6$, all positive integers are eligible; we use a PBD$(v,\{3,4,5\})$ this time, replacing each block of size $k$ with a $(k,3,6)$-design.
This, along with Theorem~\ref{pbd345a} and Theorem~\ref{main} gives us the following existence results.

\begin{prop}
(a) For $v \equiv 0,1 \pmod{3}$, there exists a $(v,3,2)$-design of dimension three for $v=15$
and all $v \geq 27$ except possibly for $v \in \{33,34,42,43,54,69,70,72,78\}$.\\
(b) For odd integers $v$, there exists a $(v,3,3)$-design of dimension three for $v=15$
and all $v \geq 27$ except possibly for $v \in \{33,35,37,41,43,47,51\}$.\\
(c) There exists a $(v,3,6)$-design of dimension three for $v=15$
and all $v \geq 27$ except possibly for $v \in\{32,33,34,35,38,41,42,43,47\}$.
\end{prop}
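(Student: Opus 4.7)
The plan is to formalize the construction that is already sketched in the three paragraphs preceding the proposition. For each admissible $\lam \in \{2,3,6\}$, I start with a PBD of dimension three whose block sizes lie in an appropriate set $K$, and I replace every block by a copy of a small BIBD on the same point set. The key claim to verify is that this operation preserves dimension three, and the rest is just bookkeeping that uses Theorem~\ref{main} and Theorem~\ref{pbd345a}.

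First I would record the trivial existence of the ingredient designs. A $(3,3,\lam)$-design is obtained by taking the single triple with multiplicity $\lam$; a $(4,3,2)$-design is the set of all four $3$-subsets of a $4$-set (each pair is in exactly $\binom{2}{1}=2$ of them); a $(5,3,3)$-design is the set of all ten $3$-subsets of a $5$-set (each pair in exactly $\binom{3}{1}=3$); and a $(k,3,6)$-design for $k \in \{3,4,5\}$ is obtained by suitably combining copies of these, since $6$ is a common multiple of $1,2,3$. Call these ingredients $D_k^{(\lam)}$.

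Next I would describe the substitution. Given a PBD$(v,K)$ of dimension three with $(X,\cB)$, form a new block family $\cB^*$ by taking, for each $B \in \cB$, a copy of $D_{|B|}^{(\lam)}$ on the point set $B$. Any two distinct points $x,y \in X$ lie in exactly one block $B$ of the PBD and therefore in exactly $\lam$ of the triples of $\cB^*$, so $(X,\cB^*)$ is a $(v,3,\lam)$-design. For dimension, take any three points $x,y,z \in X$ and let $Y=\langle x,y,z\rangle$ be the proper flat they generate in the PBD; since $(Y,\cB_Y)$ is itself a PBD$(w,K)$, the corresponding substitution on $Y$ yields a $(w,3,\lam)$-design whose block multiset is precisely $\{D \in \cB^* : D \subseteq Y\}$. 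This is a proper subdesign of $(X,\cB^*)$ containing $\{x,y,z\}$, so $(X,\cB^*)$ has dimension at least three.

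Combining this substitution with the available input designs finishes the proof: for (a), apply it with $K=\{3,4\}$ and $\lam=2$, invoking Theorem~\ref{main}(a); for (b), apply it with $K=\{3,5\}$ and $\lam=3$, invoking Theorem~\ref{main}(b); for (c), apply it with $K=\{3,4,5\}$ and $\lam=6$, invoking Theorem~\ref{pbd345a}. In each case the exceptional set of orders is inherited directly from the corresponding PBD theorem. Honestly there is no serious obstacle here; the only subtlety is the observation that the subdesign on a flat $Y$ is again produced by the same substitution rule, which is what guarantees that dimension three transfers from the PBD to the BIBD.
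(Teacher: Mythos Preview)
Your proof is correct and follows exactly the approach the paper takes: the proposition is stated there essentially as an immediate corollary of the substitution construction described in the preceding paragraphs together with Theorem~\ref{main} and Theorem~\ref{pbd345a}. Your only addition is to spell out explicitly why a proper flat $Y$ of the PBD yields a proper subdesign of the resulting BIBD (because every triple contained in $Y$ comes from some block $B$ with $|B\cap Y|\ge 2$, hence $B\subseteq Y$), which the paper leaves implicit.
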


Since $v$ determines the allowed values of $\lam$, and since we may take copies while preserving dimension three, we obtain existence of $(v,3,\lam)$-designs of dimension three whenever $\lam_{\rm min} \mid \lam$, unless $v$ is in the union of the lists of possible exceptions above.
Nonexistence in some small cases follows from nonexistence of the corresponding PBDs.

\begin{prop}
\label{nonex-small}
There does not exist a $(v,3,\lam)$-design of dimension three for $v<27,v\ne 15$.
\end{prop}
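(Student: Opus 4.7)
The plan is to adapt the structural arguments from \cite{DN} establishing the nonexistence of a PBD$(v,\{3,4,5\})$ of dimension three for these values of $v$. The key tools all carry over to the triple system setting. First, the counting proof of Lemma~\ref{subsystem-bound} uses only that block sizes are at least three and that a subdesign is closed under block containment, so it applies verbatim to $(v,3,\lambda)$-designs, yielding $v \ge 2w+1$ for every proper subdesign of order $w$. Second, as in the paragraph preceding Proposition~\ref{nonex-pbd33}, for any fixed block $B \in \cB$ and point $u \notin B$, the flat $F_u = \langle B,u\rangle$ is proper in $X$ by dimension three applied to any three points from $B \cup \{u\}$ (the fourth point of $B$ is automatically in the flat generated by the other two), so the flats $\{F_u : u \in X \setminus B\}$ partition $X \setminus B$ into classes $F_u \setminus B$.

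Suppose, for contradiction, that a $(v,3,\lambda)$-design $D=(X,\cB)$ of dimension three exists with $v<27$ and $v\ne 15$. For $v \le 6$ the bound $w \le \lfloor (v-1)/2\rfloor \le 2$ precludes any proper subdesign on three points, contradicting dimension three. For $v = 7, 8$, the bound forces $w = 3$ for every flat $\langle x,y,z\rangle$, which would mean every triple appears as a block with multiplicity exactly $\lambda$; comparing the resulting total of $\binom{v}{3}\lambda$ blocks with the actual $\lambda v(v-1)/6$ gives an immediate contradiction. For $v \in \{9,10,\ldots,14\}$, each flat $F_u$ has order $w_u \in \{4,5,6\}$ subject to $\lambda_{\min}(w_u) \mid \lambda$, and each candidate partition satisfying $v-3 = \sum_i(w_i - 3)$ can be eliminated by a block-count parity argument analogous to those in \cite{DN}, together with an inspection of pair counts inside the candidate flats.

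The main obstacle is the intermediate range $16 \le v \le 26$, where flats of order as large as $12$ may arise and the number of admissible partition types proliferates. I would handle these cases individually, following the template of the proof of Proposition~\ref{nonex-pbd33}: for each $v$, enumerate the admissible types $\{w_i\}$ using the divisibility conditions $\lambda_{\min}(w_i) \mid \lambda$ and $\lambda_{\min}(v) \mid \lambda$, and then eliminate each type either by a divisibility contradiction on the number of blocks of a given size, or by showing that the induced flat structure would force a PBD$(v,\{3,4,5\})$ of dimension three, contradicting \cite{DN}. This case analysis is essentially bookkeeping once the arithmetic framework is set up, but executing it in full detail is the primary technical burden.
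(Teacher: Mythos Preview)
Your proposal is incomplete and misses the paper's key simplifying idea. You set up a case analysis on the orders $w_u$ of the flats $F_u=\langle B,u\rangle$ and then defer the bulk of the work (all $v$ from $9$ through $26$) to unspecified ``block-count parity arguments'' and ``bookkeeping.'' That is not a proof; it is a plan for one, and you yourself flag the $16\le v\le 26$ range as the ``primary technical burden'' without discharging it.

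The paper avoids all of this with a single uniform reduction. Instead of looking at flats generated by a block and an extra point, look at the flats $\langle x,y\rangle$ generated by \emph{two} points. Any such flat sits properly inside some $\langle x,y,z\rangle$, which by dimension three is a proper subdesign of order at most $\lfloor (v-1)/2\rfloor\le 12$; applying Lemma~\ref{subsystem-bound} \emph{again} inside that subdesign gives $|\langle x,y\rangle|\le \lfloor (12-1)/2\rfloor=5$. Moreover, two distinct two-point-generated flats cannot share two points $a,b$: otherwise $\langle a,b\rangle$ would be a proper subdesign of each, forcing each to have order at least $2\cdot 3+1=7>5$. Hence the collection of two-point-generated flats, each of size in $\{3,4,5\}$, meets pairwise in at most one point, and replacing each by a single block yields a PBD$(v,\{3,4,5\})$ of dimension three. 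This contradicts Theorem~\ref{pbd345a} for every $v<27$, $v\ne 15$, in one stroke.

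So the gap is not that your approach is wrong in principle, but that you never carry it out, and you overlooked the double application of the subsystem bound that collapses the entire problem to the already-settled PBD case.
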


\begin{proof}
Consider a hypothetical $(v,3,\lam)$-design of dimension three with $v<27,v\ne15$. By Lemma~\ref{subsystem-bound} (which holds for general $\lambda$ by an analogous proof), any proper subdesign has size at most $\frac{25-1}{2}=12$. In the same way, since our design has dimension three, any subdesign generated by two points has at most $\lfloor\frac{12-1}{2}\rfloor=5$ points.  
Now, any two of these two-point-generated subdesigns which are not equal intersect in at most one point.  So, if we replace every such subdesign with a single block, the result is a PBD$(v,\{3,4,5\})$ of dimension three.  This contradicts Theorem~\ref{pbd345a}.
\end{proof}

We can additionally eliminate two more cases with some extra analysis.

\begin{prop}
\label{nonex-ts33}
There does not exist a $(33,3,\lam)$-design of dimension three with odd $\lam$.
\end{prop}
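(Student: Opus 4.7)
The plan is to imitate the proof of Proposition~\ref{nonex-small}. Suppose for contradiction that a $(33,3,\lam)$-design $D$ of dimension three exists with $\lam$ odd. Lemma~\ref{subsystem-bound} extends to general $\lam$ by an analogous proof, so every proper subdesign of $D$ has size at most $(33-1)/2=16$. Because $D$ has dimension three, any subdesign generated by two points is proper and sits inside a proper three-point-generated subdesign; a second application of the bound forces its size to be at most $\lfloor(16-1)/2\rfloor=7$. The divisibility conditions on a sub-$(w,3,\lam)$-design together with the parity of $\lam$ force $w$ to be odd, so the two-point-generated subdesigns have sizes in $\{3,5,7\}$, with $w=5$ occurring only when $3\mid\lam$.

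Following the blueprint of Proposition~\ref{nonex-small}, we propose to collapse each two-point-generated subdesign into a single block; using that distinct such subdesigns meet in at most one point, the output is a PBD$(33,K)$ of dimension three with $K\subseteq\{3,5,7\}$. Since Proposition~\ref{nonex-pbd33} rules out PBD$(33,\{3,5\})$ of dimension three, and Teirlinck's theorem rules out $K=\{3\}$, some block in the collapsed structure must have size $7$.

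The remaining task, and the main obstacle, is to exclude seven-point flats. Here we plan to adapt the structural argument of Proposition~\ref{nonex-pbd33}. Let $W$ be such a flat, and for each $z\in X\setminus W$ consider the flat $\langle W,z\rangle$, which is proper and has size in $\{9,11,13,15\}$ (with size $11$ permitted only if $3\mid\lam$); these flats partition the $26$ points outside $W$ into groups of sizes in $\{2,4,6,8\}$. Only a short list of partitions of $26$ into such parts is admissible. In each case we intend to combine a divisibility restriction on the number of size-$5$ and size-$7$ blocks in $D$ with a rows-columns-diagonals style incidence analysis for how several $15$-flats through $W$ must interact, in the same vein as the $6^310^1$ case of Proposition~\ref{nonex-pbd33}. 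We expect the broader scope of this case analysis---in particular, the configurations with multiple $15$-flats simultaneously containing $W$---to be the principal technical difficulty.
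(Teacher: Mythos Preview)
Your overall outline matches the paper's: bound two-point-generated subdesigns to size at most $7$, rule out size $7$, then collapse to a PBD$(33,\{3,5\})$ of dimension three and invoke Proposition~\ref{nonex-pbd33}. The gap is in how you handle the size-$7$ case.

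You overlook a second application of Lemma~\ref{subsystem-bound}. If $W=\langle x,y\rangle$ has $7$ points, then for any $z\in X\setminus W$ the flat $\langle W,z\rangle=\langle x,y,z\rangle$ is proper (by dimension three) and contains $W$ as a \emph{proper} subdesign. Applying Lemma~\ref{subsystem-bound} to $W$ inside $\langle W,z\rangle$ gives $|\langle W,z\rangle|\ge 2\cdot 7+1=15$, not merely $\ge 9$. Together with your upper bound of $16$ and the parity constraint from odd $\lam$, this forces $|\langle W,z\rangle|=15$ exactly. Hence the $26$ points of $X\setminus W$ are partitioned into sets of size $15-7=8$, which is impossible since $8\nmid 26$. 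That is the whole argument; no adaptation of the rows--columns--diagonals analysis from Proposition~\ref{nonex-pbd33} is needed. Your listed range $\{9,11,13,15\}$ for $|\langle W,z\rangle|$ is simply wrong, and the ``principal technical difficulty'' you anticipate does not arise.
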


\begin{proof}
Consider a hypothetical such design $(X,\mathcal{B})$. Arguing as in the proof of Proposition~\ref{nonex-small}, two-point-generated subdesigns have size at most $7$.  Since $\lam$ is odd, \eqref{bibd-local} leaves only the possibilities $3,5,7$.  Suppose two points $x$ and $y$ generate a subdesign $(Y,\mathcal{B}_Y)$ of size 7. The points of $X\setminus Y$ fall into distinct subdesigns of the form $\langle Y,z\rangle$, $z\in X\setminus Y$, all of which necessarily have size 15.  But this is impossible as $33-7=26$ is not a multiple of $15-7=8$.  It follows that there is no subdesign of size 7 generated by two points. We now proceed as in the proof of Proposition~\ref{nonex-small}, getting a PBD$(33,\{3,5\})$ of dimension three and contradicting Proposition~\ref{nonex-pbd33}.
\end{proof}

\begin{prop}
There does not exist a $(32,3,\lam)$-design of dimension three.
\end{prop}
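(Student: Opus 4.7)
The plan is to follow the strategy of Propositions~\ref{nonex-small} and~\ref{nonex-ts33}: rule out the large two-point-generated subdesigns, and then contract the remaining ones to single blocks to obtain a PBD$(32, \{3,4,5\})$ of dimension three, contradicting Theorem~\ref{pbd345a}. Since $v = 32 \equiv 2 \pmod 6$, the divisibility conditions force $\lambda$ to be a multiple of $6$; the standard argument via Lemma~\ref{subsystem-bound} then bounds any two-point-generated subdesign $W = \langle x, y \rangle$ by $|W| \leq 7$.

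The case $|W| = 7$ I would handle exactly as in Proposition~\ref{nonex-ts33}: the subdesign $\langle W, z\rangle$ would have to have size $15$ for every $z \notin W$, forcing a partition of the $25$ external points into parts of size $8$, which is impossible.

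The main obstacle will be ruling out $|W| = 6$. When $|W| = 6$, the subdesigns $\langle W, z\rangle$ have sizes in $\{13, 14, 15\}$, so $|X \setminus W| = 26$ splits into parts drawn from $\{7, 8, 9\}$ summing to $26$; the only feasible partition is $9 + 9 + 8$. Hence $W$ lies in exactly three subdesigns $F_1, F_2, F_3$ of sizes $15, 15, 14$, and the sets $F_i^* := F_i \setminus W$ (of sizes $9, 9, 8$) partition $X \setminus W$. My plan is to derive a contradiction from a ``transversal block'' count: for distinct $i, j$ and any $a \in F_i^*$, $b \in F_j^*$, the third point $c$ of each of the $\lambda$ blocks through $(a, b)$ must lie in $F_k^*$ with $\{i, j, k\} = \{1, 2, 3\}$, because $c \in W \cup F_i^*$ would give $\langle a, c \rangle \subseteq F_i$ and force $b \in F_i$, and symmetrically $c \notin F_j^*$. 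Counting these transversal blocks via pairs in $F_1^* \times F_2^*$ then yields $9 \cdot 9 \cdot \lambda = 81\lambda$, while counting via pairs in $F_1^* \times F_3^*$ yields $9 \cdot 8 \cdot \lambda = 72\lambda$; since each transversal block is counted exactly once by each pair type, these totals must agree---a contradiction.

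With sizes $6$ and $7$ excluded, every pair would generate a subdesign of size $3$, $4$, or $5$. By Lemma~\ref{subsystem-bound} applied inside such a subdesign, any pair in it regenerates the whole subdesign, so distinct two-point-generated subdesigns intersect in at most one point; contracting each to a single block of its size then produces a PBD$(32, \{3,4,5\})$ of dimension three, contradicting Theorem~\ref{pbd345a}.
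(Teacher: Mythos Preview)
Your proposal is correct and follows essentially the same route as the paper's proof: bound two-point-generated subdesigns by size $7$, eliminate sizes $7$ and $6$, then contract to a PBD$(32,\{3,4,5\})$ of dimension three and invoke Theorem~\ref{pbd345a}. The paper dismisses the $9+9+8$ configuration with the phrase ``not feasible by a counting argument''; your transversal-block double count (forcing $|F_1^*|\cdot|F_2^*|=|F_1^*|\cdot|F_3^*|$) is a clean and correct way to make that step explicit.
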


\begin{proof}
Again, suppose $(X,\mathcal{B})$ is such a design. Possible sizes for two-point-generated subdesigns are $3,4,5,6,7$. The possibility of a subdesign of size 7 is eliminated by a similar analysis as in the proof of Proposition~\ref{nonex-ts33}.  
Suppose two points $x$ and $y$ generate a subdesign $(Y,\mathcal{B}_Y)$ of size 6. The points of $X\setminus Y$ are partitioned by the subdesigns $\langle Y,z\rangle$, $z\in X\setminus Y$, which have sizes in $\{13,14,15\}$. We therefore require a solution to $26=7a+8b+9c$, the only one in nonnegative integers being $a=0,b=1,c=2$.  This is not feasible by a counting argument. Therefore there is no subdesign of size 6 generated by two points. 
Now we again proceed as in the proof of Proposition~\ref{nonex-small}, getting a PBD$(32,\{3,4,5\})$ of dimension three and contradicting Theorem~\ref{pbd345a}.
\end{proof}

We summarize the results of this section in Table~\ref{ts-table}, for which we assume $\lam_{\rm min} \mid \lam$.

\begin{table}[htbp]
\begin{tabular}{l|c|c}
number of points & existence & nonexistence\\
\hline
$v \in \{\dots,14,16,\dots,26,32\}$ & --- & any $\lam$	\\
$v =33$ & ? & odd $\lam$ \\
$v =51$  & $2 \mid \lam$ & ? \\
$v =69$  & $3 \mid \lam$ & ? \\
$v \in \{34,35,37,38,41,43,47,70,72,78\}$ & ? & ? \\
otherwise & any $\lam$ & ---	\\
\end{tabular}
\caption{status of $(v,3,\lam)$-designs of dimension three}
\label{ts-table}
\end{table}
Triple systems of dimension three  which are \emph{simple} (have no repeated blocks) can be constructed for sufficiently large $v$ from various PBD$(v,K)$ 
of dimension three, where $K$ is chosen carefully to avoid the need for repeated blocks.  For instance, simple $(v,3,2)$-designs of dimension three exist for all sufficiently large $v \equiv 0,1\pmod{3}$ by taking $K=\{4,6,7\}$, where the needed PBD of dimension three exists by the main result of \cite{DL2}.  Similarly, we could take $K=\{5,7\}$ for simple $(v,3,3)$-designs.  At this time, however, we have no explicit bound on $v$ for existence of such PBDs of dimension three.

\subsection{Idempotent symmetric latin squares}

A \emph{latin square} of order $n$ is an $n \times n$ array on $n$ symbols such that every row and every column is a permutation of the symbols.  A latin square $L$ is \emph{symmetric} if $L_{ij} = L_{ji}$ for any indices $i$ and $j$.  We may assume the set of symbols (and row/column indices) is $[n]:=\{1,\dots,n\}$.   A latin square is \emph{idempotent} if the entry in diagonal cell $(i,i)$ is $i$ for each $i \in [n]$.  

Idempotent symmetric latin squares can be constructed as `back circulants' for all odd integers $n$.  Examples for orders 3 and 5 are shown below.
$$
\begin{array}{|ccc|}
\hline
1&3&2\\
3&2&1\\
2&1&3\\
\hline
\end{array}
\hspace{3cm}
\begin{array}{|ccccc|}
\hline
1&4&2&5&3 \\
4&2&5&3&1\\
2&5&3&1&4\\
5&3&1&4&2\\
3&1&4&2&5\\
\hline
\end{array}
$$

A (latin) \emph{subsquare} is a sub-array which is itself a latin square. Note that such a sub-array need not be on a contiguous set of rows and columns.  For instance, if $H \le G$ are finite groups, the operation table of $H$ is a latin subsquare of the operation table of $G$.

Idempotent latin squares can be joined using a PBD.  In more detail, suppose we have a PBD$(n,K)$, where $K \subseteq \Z_{\ge 3}$.  For every block $B$, let $L^{B}$ be an idempotent latin square on the symbols of $B$. Then we obtain an $n \times n$ idempotent latin square $L$, defined by
\begin{equation}
\label{EQN:ls-glue}
L_{ij} = 
  \begin{cases}
    i, & \text{if $i=j$};\\
    L_{ij}^{B}, & i\neq j, \text{letting $B$ be the block for which $\{i,j\} \subset B$}.
  \end{cases}
\end{equation}

Using this construction in conjunction with Theorem~\ref{pbd345a}, it was shown in \cite{DN} that for $n \ge 48$ there exists an idempotent latin square of order $n$ having the property that any row, column and symbol appear together in a proper latin subsquare. 
We note the following similar consequence of Theorem~\ref{main}(b) for symmetric latin squares having the same property.

\begin{thm}
There exists an idempotent symmetric latin square of order $n$ in which any choice of row, column, and symbol is contained in a proper latin subsquare if $n$ is odd, $n \ge 27$ and $n \not\in \{33,35,37,41,43,47,51\}$.
\end{thm}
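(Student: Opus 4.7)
The plan is to apply the gluing construction \eqref{EQN:ls-glue} to a PBD$(n,\{3,5\})$ of dimension three, using a back-circulant idempotent symmetric latin square on each block. By Theorem~\ref{main}(b), such a PBD $(X,\cB)$ exists for the stated values of $n$. For each block $B \in \cB$, since $|B|\in\{3,5\}$ is odd, the quasigroup $L^B_{ij}=(i+j)\cdot 2^{-1}$ (arithmetic in $\Z_{|B|}$ transferred to the symbols of $B$) is an idempotent symmetric latin square on $B$, agreeing with the back-circulants exhibited in the examples above. Define $L$ on $[n]$ via \eqref{EQN:ls-glue}.

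The resulting $L$ is idempotent by construction, and symmetric because each $L^B$ is: for $i\neq j$, the unique block $B=\langle i,j\rangle$ gives $L_{ij}=L^B_{ij}=L^B_{ji}=L_{ji}$, and diagonal cells are trivially symmetric. The second step is to verify that flats of the PBD yield latin subsquares of $L$. Given any flat $F\subseteq X$, if $i=j\in F$ then $L_{ii}=i\in F$, and if $i\neq j$ in $F$ then the flat property forces $\langle i,j\rangle\subseteq F$, so $L_{ij}=L^{\langle i,j\rangle}_{ij}\in\langle i,j\rangle\subseteq F$. Hence the sub-array of $L$ indexed by $F$ is a latin subsquare on $F$, which is proper exactly when $F\subsetneq X$.

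Finally, to confirm the covering property, fix arbitrary elements $r,c,s\in[n]$ (the chosen row, column, and symbol, not necessarily distinct). Since the PBD has dimension three, the set $\{r,c,s\}$ generates a proper flat $F=\langle r,c,s\rangle\subsetneq X$, and by the previous paragraph the sub-array indexed by $F$ is a proper latin subsquare of $L$ containing the given triple. No substantive obstacle arises here: the conclusion reduces to Theorem~\ref{main}(b) together with the routine observation that the gluing \eqref{EQN:ls-glue} transfers flats of the underlying PBD to latin subsquares, and that back-circulants supply symmetric idempotent latin squares of the required odd block sizes.
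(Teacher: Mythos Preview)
Your proof is correct and follows essentially the same approach as the paper: take a PBD$(n,\{3,5\})$ of dimension three from Theorem~\ref{main}(b), glue back-circulant idempotent symmetric latin squares over its blocks via \eqref{EQN:ls-glue}, and observe that any three points lie in a proper flat, which indexes a proper latin subsquare. Your version is in fact more explicit than the paper's in verifying that flats restrict to subsquares and in handling the case where $r,c,s$ are not all distinct.
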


\begin{proof}
Given an integer $n$ satisfying the stated conditions, take a PBD$(n,\{3,5\})$ from Theorem~\ref{main}(b).  Build a latin square $L$ of order $n$ as in \eqref{EQN:ls-glue}, where we use the ingredient squares of orders $3$ and $5$ above.  Since these ingredients are symmetric, so is $L$.  The choice of any row, column, and symbol of $L$ amount to a selection of three points of the PBD.  By assumption, these are contained in a proper flat, say $Y$.  Then the restriction of $L$ to rows, columns and symbols indexed by $Y$ is a latin subsquare containing the chosen trio.
\end{proof}

\end{document}